\definecolor{darkblue}{rgb}{0,0,0.6}
\newcommand{\R}{\mathbb{R}}
\newcommand{\E}{\mathbb{E}}
\newcommand{\PP}{\mathbb{P}}
\newcommand{\N}{\mathbb{N}}
\newcommand{\Z}{\mathbb{Z}}
\newcommand{\De}{\Delta}
\newcommand{\h}{\mathcal H}
\newcommand{\f}{\mathcal F}
\newcommand{\cof}{\mathrm{COF}}
\newcommand{\toop}{\stackrel{\PP}{\longrightarrow}}
\newcommand{\schw}{\stackrel{d}{\longrightarrow}}
\newcommand{\stab}{\stackrel{st}{\longrightarrow}}
\newcommand{\ucp}{\stackrel{\mbox{\tiny u.c.p.}}{\longrightarrow}}
\newcommand{\bee}{\begin{equation}}
\newcommand{\eee}{\end{equation}}
\newcommand{\beea}{\begin{array}}
\newcommand{\eeea}{\end{array}}
\renewcommand{\theequation}{\arabic{section}.\arabic{equation}}
\theoremstyle{plain}
\newtheorem{prop}{Proposition}[section]
\newtheorem{theo}[prop]{Theorem}
\newtheorem{lem}[prop]{Lemma}
\theoremstyle{definition}
\theoremstyle{remark} 
\newtheorem{rem}[prop]{Remark}
\begin{document}

\title{Asymptotic theory for Brownian semi-stationary processes with application to turbulence}
\author{Jos\'e Manuel Corcuera\thanks{Universitat de Barcelona, Gran Via de les Corts Catalanes 585, 08007
Barcelona, Spain. Email: \href{mailto:jmcorcuera@ub.edu}{\nolinkurl{jmcorcuera@ub.edu}}.} \and Emil Hedevang\thanks{Department of Mathematics,
Aarhus University, Ny Munkegade 118, 8000 Aarhus C, Denmark. Email:  \href{mailto:emilhedevang@gmail.com}{\nolinkurl{emilhedevang@gmail.com}}.} \and Mikko S. Pakkanen\thanks{CREATES and Department
of Economics and Business, Aarhus University, Fuglesangs All\'e 4, 8210 Aarhus V, Denmark.
E-mail: \href{mailto:mpakkanen@creates.au.dk}{\nolinkurl{msp@iki.fi}}.} \and 
Mark Podolskij\thanks{Department of Mathematics, Heidelberg University,  INF 294, 69120 Heidelberg,
Germany, Email: \href{mailto:m.podolskij@uni-heidelberg.de}{\nolinkurl{m.podolskij@uni-heidelberg.de}}.}}


\maketitle

\begin{abstract}
This paper presents some asymptotic results for statistics of Brownian semi-station\-ary ($\mathcal{BSS}$) processes. More
precisely, we consider power variations of $\mathcal{BSS}$ processes, which are based on high frequency
(possibly higher order) differences of the  $\mathcal{BSS}$ model. We review the limit theory discussed in 
\cite{BCP11, BCP12} and present some new connections to fractional diffusion models. 
We apply our probabilistic results to construct a family of estimators for the smoothness parameter of the  
$\mathcal{BSS}$ process. In this context we develop estimates with gaps, which allow to obtain a valid central limit
theorem for the critical region. Finally, we apply our statistical theory to turbulence data.

\ \

{\it Keywords}: \
Brownian semi-stationary processes, high frequency data, limit theorems, stable convergence, turbulence.\bigskip

{\it AMS 2010 subject classifications.} Primary ~60F05, ~60F15, ~60F17;
Secondary ~60G48, ~60H05.

\end{abstract}




\section{Introduction} \label{Intro}
\setcounter{equation}{0}
\renewcommand{\theequation}{\thesection.\arabic{equation}}

In this paper we study the probabilistic
limit behaviour of (realised) power variation, based on higher order
differences, in relation to the class of \emph{Brownian semi-stationary} ($%
\mathcal{BSS}$) processes. This class, which was introduced in \cite{BS09},
consists of the processes $( Y_{t}) _{t\in \mathbb{R}}$\ that
are defined by%
\begin{equation*}
Y_{t}=\mu +\int_{-\infty }^{t}g(t-s)\sigma _{s}W(\mathrm{d}s)+\int_{-\infty
}^{t}q(t-s)a_{s}\mathrm{d}s
\end{equation*}%
where $\mu $ is a constant, $W$ is a Brownian measure on $\mathbb{R}$, $g$
and $q$ are nonnegative deterministic weight functions on $\mathbb{R}$, with
$g( t) =q( t) =0$ for $t\leq 0$, and $\sigma $ and $a$
are c\`{a}dl\`{a}g processes. When $(\sigma ,a)$ is stationary and independent of $W$, then $%
Y$ is stationary, which motivates the name Brownian semi-stationary process.

In the context of stochastic modelling in turbulence the process $\sigma $
embodies the \emph{intermittency} of the dynamics. For detailed discussion
of $\mathcal{BSS}$ and the more general concept of tempo-spatial ambit
processes see \cite{BS04,BS07a,BS07b,BS08a,BS08b,BS09}. Such processes are, in particular, able to reproduce
the key stylized features of turbulence data, such as homogeneity, stationarity, skewness, and isotropy.
In general, $\mathcal{BSS}$ processes are not semimartingales (see the discussion in Section \ref{sec2-2-1} for more details). In
consequence, various important asymptotic techniques developed for semimartingales, see
for instance \cite{BGJPS,J2,PV}, such as the calculation of quadratic
variation by It\^{o} calculus algebra and those of multipower variation, do
not apply or suffice in the $\mathcal{BSS}$\ setting.

This study includes a review and some extensions of the theory developed in
\cite{BCP11,BCP12}. In this paper we will mainly consider $\mathcal{BSS}$ processes without drift (i.e. $a\equiv 0$),
which we denote by $X$. First, let us recall that a properly normalized version of {\it power variation} 
\[
\sum_{i=1}^{[t/\Delta_n]} |\Delta_i^n X|^p, \qquad \Delta_i^n X:=X_{i\Delta_n} - X_{(i-1)\Delta_n},  
\]     
constitute a consistent estimator of the stochastic quantity $\int_0^t |\sigma_s|^p ds$. Such quantities play a similar role
in turbulence as in finance: they represent a variability measure of the process $X$. The required normalization 
depends on the parameter $\alpha$, which describes the behaviour of the weight function $g$ near $0$. More precisely, we 
consider functions $g$ with $g(x)\sim x^\alpha$ as $x\downarrow 0$. We call $\alpha$ the {\it smoothness parameter} as it describes the
smoothness properties of $X$ and it corresponds to Kolmogorov's scaling law in turbulence. The parameter $\alpha$ also
controls the weak limit regime and the rate of convergence for the standardized version of power variation. For a subrange
of parameters $\alpha$ power variations turn out to be asymptotically mixed normal with convergence
rate $\Delta_n^{-1/2}$, while other $\alpha$'s lead to Rosenblatt type of limits and slower convergence rates. 
We refer to \cite{T} for the corresponding results in the Gaussian setting. 

The  main objective of the paper is to use the asymptotic theory for power variations 
to construct efficient estimators of the smoothness parameter appearing in the weight function $g$.
It turns out that it is preferable to consider higher order increments when computing power variation statistics.
In our setting using higher order differences has the following crucial advantages: (a) The asymptotic mixed normality of power 
variations becomes valid for all smoothness parameters of interest (a known effect for Gaussian models; see e.g. \cite{IL}),
(b) The power variations are more robust to  the presence of smooth drift functions. We will review these properties 
in Section \ref{limit} and present some new results. In the next step, we will apply the asymptotic theory for power variations
to construct a consistent estimator of the parameter $\alpha$. Our main tools are realised variation ratios, which compare
power variations at two different sampling frequencies ($\Delta_n$ and $2\Delta_n$). In the critical regime we will use
estimators with gaps to obtain rate optimal estimators of $\alpha$. This type of estimators were studied by Lang and Roueff \cite{LR}
in the purely Gaussian framework. 

Finally, we will apply our statistical methods to a turbulence data set obtained from one-point measurements of the
longitudinal component of the wind velocity in the atmospheric boundary layer. Our estimator lies around $\alpha =-1/6$,
a parameter value that corresponds to the celebrated Kolmogorov's $5/3$-law. Furthermore, our statistics show
a rather stable behaviour when the power $p$ is ranging between $1.5$ and $2.5$. These facts demonstrate that $\mathcal{BSS}$
processes constitute adequate models for turbulent flows. 

The paper is organized as follows. In Section \ref{definition} we introduce common
notation, definitions, main assumptions and some probabilistic properties of
the procesess under consideration; in particular, we establish new connections between the $\mathcal{%
BSS}$ processes and certain fractional diffusions processes. In Section \ref{limit} we develop the limit
results, in Section \ref{statistics} we construct estimators for the smoothness parameter
of the weight function $g$, study their properties and in Section \ref{turbulence} we apply them to
turbulence data.

\section{Definitions and first probabilistic properties}\label{definition}
\setcounter{equation}{0}
Let $(\Omega ,\mathcal{F},(%
\mathcal{F}_{t})_{t\in \mathbb{R}},\mathbb{P})$ be a given filtered
probability space. We shall consider a $\mathcal{BSS}$ process $(X_{t})_{t\in \mathbb{%
R}}$, without drift, defined by
\begin{equation}
X_{t}=\int_{-\infty }^{t}g(t-s)\sigma _{s}W(ds),\qquad t\in \mathbb{R},
\label{x}
\end{equation}%
where $W$ is an $(\mathcal{F}_{t})_{t\in \mathbb{R}}$-adapted white noise on $\mathbb{R}$, $g:\mathbb{R}%
^{+}\rightarrow \mathbb{R}$ is a deterministic weight function satisfying $%
g\in L^{2}(\mathbb{R}^{+})$, and $\sigma $ is an \ $(\mathcal{F}_{t})_{t\in \mathbb{R}}$%
-adapted c\`{a}dl\`{a}g intermittency process. By an $\left( \mathcal{F}%
_{t}\right) $-adapted white noise we understand a zero-mean Gaussian random measure on Borel sets $A\subset \mathbb{R}$ such that $m(A) <\infty$, with covariance
\begin{equation*}
\E[W(A)W(B)] = m(A\cap B),
\end{equation*}%
where $m$ is the Lebesgue measure. Moreover, if $A\subset [ t,\infty )$
then $W(A)$ is independent of $\mathcal{F}_{t}$ and if $A\subset (-\infty,t]$ then $W(A)$ is $\mathcal{F}_{t}$-measurable. Such a random measure is a particular case of
a \textit{martingale measure.}  Note that for $a\in \mathbb{R}$ the process $\big(W([a,t])\big)_{t\geq a}$ is a standard
Brownian motion. Integrals with respect to martingale measures are an
extension of It\^{o} integrals, see \cite{W} for their definition.

In order to guarantee the a.s.\ finiteness 
of the integral, we furthermore assume that 
\begin{align} \label{finite}
\int_{-\infty}^t g^2(t-s) \sigma_s^2 ds <\infty \quad \mbox{a.s.}, 
\end{align}
for any $t\in \R$. This condition is obviously satisfied when $(\sigma_t)_{t\in \R}$ is a stationary process 
with a finite second moment (recall that $g\in L^2(\R^{+})$). We assume that the underlying observations 
are
\[
X_{i\Delta_n}, \qquad i=0,\ldots, [t/\Delta_n],
\]    
with $t>0$ being fixed and $\Delta_n\rightarrow 0$. This type of sampling is customarily called infill asymptotics
or high frequency data. 
Our class of statistics is based upon higher order differences of the $\mathcal{BSS}$ process $X$. We briefly recall
the definition: For any $k\in \N$, the $k$-th order difference $\Delta_{i,k}^{n,v} X$ at frequency $v\Delta_n$, where $v\in \N$, and at stage $i\geq vk$ is defined 
by
\[
\Delta_{i,k}^{n,v} X:= \sum_{j=0}^k (-1)^j \binom{k}{j} X_{(i-vj)\Delta_n}.
\]
However, when $v = 1$ we usually write $\Delta_{i,k}^{n}X$ instead of $\Delta_{i,k}^{n,1}X$.
For example,
\[
\Delta_{i,1}^n X = X_{i\Delta_n}-X_{(i-1)\Delta_n} \quad \textrm{and} \quad \Delta_{i,2}^n X = X_{i\Delta_n}-2X_{(i-1)\Delta_n}+X_{(i-2)\Delta_n}.
\]
In this paper we will restrict our attention to the higher order differences with $k\geq 1$, although other filters 
may be considered. In order to define power variations of the process $X$ we need to introduce some further 
notation. We consider a centered stationary Gaussian process $G=(G_t)_{t\in \R}$, which we will call
the Gaussian core of $X$, that is given as
\begin{align} \label{g}
G_t: = \int_{-\infty}^t g(t-s)  W(ds), \qquad t\in \R.
\end{align}  
Note that $G_t<\infty$ since $g\in L^2(\R^{+})$. The correlation kernel $r$ of $G$ is given via 
\[
r(t) = \frac{\int_0^\infty g(u)g(u+t) du}{\|g \|_{L^2(\R^{+})}^2}, \qquad t\geq 0.
\]
Most crucial quantity for the asymptotic theory is the variogram  $R$, i.e.
\begin{align} \label{R}
R(t):= \E[(G_{t+s} - G_s)^2] = 2 \|g \|_{L^2(\R^{+})}^2 (1-r(t)).
\end{align} 
We introduce two closely related  power variations based on increments $\Delta_{i,k}^{n,v}$ of order $k$ 
at frequency $v\Delta_n$ as 
\begin{align} 
\label{powervarpur}
V(X,p,k,v;\Delta_n)_t &:= \sum_{i=vk}^{[t/\Delta_n]} |\Delta_{i,k}^{n,v} X|^p, \\
\label{powervar}
\bar{V}(X,p,k,v;\Delta_n)_t &:= \Delta_n \tau_{k}(v\Delta_n)^{-p}V(X,p,k,v;\Delta_n)_t, 
\end{align} 
where $p$ is a positive power and $\tau_{k}(v\Delta_n):=\sqrt{\E[|\Delta_{i,k}^{n,v} G|^2]}$. 
To determine the limiting 
behaviour of the statistics $\bar{V}(X,p,k,v;\Delta_n)_t$, we need to introduce a set of assumptions, which we discuss
in the next subsection.

\subsection{Main assumptions}
We start with various conditions on the weight function $g: \R^{+} \rightarrow \R$. Below, all functions $L_f:\R^{+} \rightarrow \R$,
indexed by a given mapping $f$, are continuous and slowly varying at $0$, i.e. 
$\lim_{x\downarrow  0} L_f(tx)/L_f(x) =1$ for any $t>0$. Furthermore, the function $f^{(m)}$ denotes the $m$-th derivative
of $f$ and $\alpha$ denotes a number in $(-\frac 12, 0)\cup (0, \frac 12)$.  \\ \\
\hypertarget{A1}{(A1)}: It holds that \\ \\
(i) $g(x)= x^\alpha  L_g(x)$. \\ \\
(ii) $g^{(k)}(x)= x^{\alpha -k} L_{g^{(k)}} (x)$ and, for any $\varepsilon >0$, we have 
$g^{(k)}\in L^2((\varepsilon, \infty ))$. Furthermore, $|g^{(k)}|$ is non-increasing 
on the interval $(a,\infty)$ for some $a>0$. \\ \\
(iii) For any $t>0$
\begin{align} \label{F}
F_t= \int_1^\infty |g^{(k)}(s)|^2 \sigma_{t-s}^2 ds <\infty.  
\end{align} 
We remark that in the case of $\alpha =0$ the assumptions (\hyperlink{A1}{A1})(i) and (\hyperlink{A1}{A1})(ii) are never satisfied simultaneously,
hence we excluded this case. The next set of assumptions deals with the variogram $R$. \\ \\
\hypertarget{A2}{(A2)}: For the smoothness parameter $\alpha$ from (A1) it holds that \\ \\
(i) $ R(x) = x^{2\alpha+1} L_{R}(x)$. \\ \\
(ii) $ R^{(2k)}(x) = x^{2\alpha -2k+1} L_{R^{(2k)}}(x)$. \\ \\   
(iii) There exists a $b\in (0,1)$ such that 
\begin{equation*} 
\limsup_{x\downarrow 0} \sup_{y\in [x,x^b]} \Big|\frac{L_{R^{(2k)}}(y)}{L_{R}(x)} \Big| <\infty . 
\end{equation*} 
This set of assumptions is standard in the literature; see e.g. \cite{BCP12, GL}. These conditions are required
to develop the asymptotic theory for power variation of higher order differences of the Gaussian process $G$
defined at \eqref{g}. 
Although the variogram $R$ is uniquely determined by the weight function $g$, assumption (\hyperlink{A1}{A1}) does not
imply (\hyperlink{A2}{A2}) in general. However, when the slowly varying function $L_g$ is smooth enough and satisfies  
$\lim_{x\downarrow 0}L_g(x) = c\in (0,\infty)$, the condition (\hyperlink{A1}{A1})(i) would naturally imply all other conditions
from (\hyperlink{A1}{A1}) and (\hyperlink{A2}{A2}) except (\hyperlink{A1}{A1})(iii). Finally, we present an assumption on the smoothness of the process 
$\sigma$, which is required for the proof of the central limit theorems. \\ \\
\hypertarget{A3}{(A3-$\gamma$)}: For any $q>0$, it holds that 
\begin{align} \label{holder}
\E[|\sigma_t -\sigma_s|^q] \leq C_q|t-s|^{\gamma q}  
\end{align}
for some $\gamma >0$ and $C_q>0$. In particular, condition \eqref{holder} implies by the Kolmogorov's
criteria  that the process $\sigma$
has $\beta$-H\"older continuous paths for all $\beta\in (0,\gamma )$. \\ \\
The assumptions (\hyperlink{A1}{A1}) and (\hyperlink{A2}{A2}) imply certain probabilistic properties of the $\mathcal{BSS}$ process $X$, which 
we explain in the next subsection.

\subsection{Some probabilistic properties} 
This subsection is devoted to probabilistic properties of the processes $G$ and $X$, which are direct consequences 
of assumptions (\hyperlink{A1}{A1}) and (\hyperlink{A2}{A2}).

\subsubsection{Is a $\mathcal{BSS}$ process a semimartingale?} \label{sec2-2-1}
After defining the class of $\mathcal{BSS}$ processes one naturally asks if this is a subclass of continuous semimartingales.
We start by exploring this question for the Gaussian core $G$ defined at \eqref{g}, as it directly influences
the fine structure of the process $X$. Observing the decomposition
\[
G_{t+\Delta } - G_t = \int_t^{t+\Delta} g(t+\Delta -s) W(ds) + \int_{-\infty}^t \{g(t+\Delta -s) - g(t-s) \} W(ds), 
\]
we obtain by formal differentiation 
\[
dG_t = g(0+) dW(t) + \Big(\int_{-\infty}^t g'(t-s) W(ds)\Big) dt,
\]
where we use the convention $g'=g^{(1)}$. Indeed, the Gaussian process $G$ is an It\^o semimartingale 
when $g(0+)<\infty$ and $g'\in L^2(\R^+)$ and this property also transfers to the $\mathcal {BSS}$ process $X$
under mild assumptions. However, this situation is less interesting from the theoretical point of view,
since the asymptotic behaviour of power variation of continuous semimartingales is rather well-understood; we refer to \cite{BGJPS,J2,PV} for more details. The
assumption (\hyperlink{A1}{A1})(i) implies that
\begin{equation*}
g\in L^{2}(\mathbb{R}^{+})\qquad \mbox{but}\qquad g^{\prime }\not\in L^{2}(%
\mathbb{R}^{+}),
\end{equation*}%
because the derivative of $g(x)=x^{\alpha }L_{g}(x)$ is not square
integrable near $0$ for $\alpha \in (-\frac{1}{2},0)\cup (0,\frac{1}{2})$.
It can be shown, see \cite{B08}, that the conditions $g(0+)<\infty $ and   $%
g^{\prime }\in L^{2}(\mathbb{R}^{+})$ are also necessary conditions for $%
G$ to be a semimartingale. Hence, the process $G$, and so the process $X$
(unless $\sigma =0$), is not a semimartingale.

\subsubsection{Asymptotic correlation structure and some consequences} \label{asycorr}
Now, we turn our attention to the local behaviour of the Gaussian core $G$. Assumption (\hyperlink{A2}{A2})(i) suggests that 
the small scale behaviour of the increments of $G$ is similar to the small scale behaviour of the increments of $B^H$,
where $B^H$ is a fractional Brownian motion with Hurst parameter $H=\alpha + 1/2\in (0,1)$. This connection can 
be formalized as follows: Let $r_{k,n}$ be the correlation structure of higher order increments $\Delta_{i,k}^n G$, i.e.
\begin{align} \label{rn}
r_{k,n} (j):= \mbox{corr} (\Delta_{1,k}^n G, \Delta_{1+j,k}^n G), \qquad j\geq 0. 
\end{align}  
Then the polarization argument and condition (\hyperlink{A2}{A2})(i) imply the convergence
\begin{align} \label{rnconv}
\lim_{n\rightarrow \infty } r_{k,n} (j) = \rho_k (j),  
\end{align}         
where $\rho_k $ is the correlation function of the $k$-th order increments $\Delta_{i,k} B^H$ that are defined by
\[
\Delta_{i,k} B^H:= \sum_{j=0}^k (-1)^j \binom{k}{i} B^H_{i-j} 
\qquad i\geq k, \quad k\geq 2,
\] 
and $H=\alpha + 1/2$. For instance, for $k=1$ it obviously holds by assumption (\hyperlink{A2}{A2})(i) that
\[
r_{1,n} (j) = \frac{R((j+1)\Delta_n) - 2 R(j\Delta_n) + R((j-1)\Delta_n)}{2R(\Delta_n)} \rightarrow 
\frac{1}{2} \Big(|j+1|^{2H} - 2|j|^{2H} + |j-1|^{2H} \Big),
\]
where the right side is the correlation function of the fractional Brownian noise. Consequently, the limit
theory (law of large numbers and the central limit theorem) for the power variation $\bar{V}(G,p,k;\Delta_n)_t$ 
is expected to be the same as for $\bar{V}(B^H,p,k;\Delta_n)_t$ with $H=\alpha + 1/2$. The latter is well-understood 
since the work of \cite{BM}. We shortly recall this classical result. Let 
\begin{align} \label{f}
f(x)&=|x|^p - m_p, \\
\label{mp}
m_p&=\E[|U|^p], \qquad U\sim \mathcal N(0,1). 
\end{align}
Then the function $f$, which is associated with a centered version of the statistic 
\begin{equation*}
V(B^H,p,k,1;\Delta_n)_t,
\end{equation*}
exhibits a Hermite expansion of the form
\begin{align} \label{fherm}
f(x)=\sum_{l=2}^{\infty} a_l H_l(x),
\end{align} 
where $(H_l)_{l\geq 0}$ are Hermite polynomials and $a_2 \not = 0$. The number $2$, which is the minimal index $l$
with $a_l \not =0$, is called the Hermite rank of the function $f$. Now, the statistic $\bar{V}(B^H,p,k;\Delta_n)_t$ is asymptotically
normal (with a standard $\Delta_n^{-1/2}$ rate) whenever the condition
\[
\sum_{j=1}^{\infty} \rho_k (j)^2<\infty 
\]  
holds, where the power reflects the Hermite rank of $f$. A simple computation shows that $|\rho_k (j)|= O( j^{2(H-k)})$ as
$j\rightarrow \infty$. Hence, the preceding condition is satisfied for (a) $k=1$ and $H\in (0, 3/4)$, (b) $k\geq 2$ 
and $H\in (0,1)$. When $k=1$ we obtain the following cases:
\begin{align*}
0<H<3/4 &: \qquad \Delta_n^{-1/2}\left( \bar{V}(B^H,p,1,1;\Delta_n)_t - m_p t \right) \schw v_p W'_t, \\
H=3/4 &:   \qquad (\Delta_n \log{\Delta_n^{-1}})^{-1/2}\left( \bar{V}(B^H,p,1,1;\Delta_n)_t - m_p t \right) \schw \tilde{v}_p W'_t, \\
3/4<H<1 &:   \qquad \Delta_n^{2H-2}\left( \bar{V}(B^H,p,1,1;\Delta_n)_t - m_p t \right) \schw L_t,
\end{align*}
where the weak convergence takes place on $\mathbb D([0,T])$ equipped with the uniform topology, $W'$ denotes a Brownian motion
and $L$ is a Rosenblatt process (see e.g. \cite{T}). Finally, the constants $v_p$ and $\tilde{v}_p$ are given by
\begin{align*}
v_p &:= \sum_{l=2}^\infty l! a_l^2 \Big(1+ 2 \sum_{j=1}^\infty \rho_1(j)^l \Big), \\
\tilde{v}_p  &:= 2 \lim_{n\rightarrow \infty} \frac{1}{\log n} \sum_{j=1}^{n-1} \frac{n-k}{n} \rho_1(j)^2 \cdot
\sum_{l=2}^\infty l! a_l^2.
\end{align*}

\subsubsection{Connection to integral processes} \label{integralpr}
In the papers \cite{BCP09,CNW,NNT} the asymptotic theory for power variation of integrals with respect to Gaussian 
processes and related functionals have been developed. A brief comparison of the asymptotic results for such integral 
processes and $\mathcal{BSS}$ models shows that the limit theory is quite similar. Thus, a natural question appears: 
Do $\mathcal{BSS}$ models exhibit a representation as an integral with respect to a Gaussian process? First, we observe 
that
\[
X_t \not = \int_0^t \sigma_s dG_s,
\] 
where $G$ is a Gaussian core of $X$. However, these two processes are indeed related. Let us assume for the moment
that the process $\sigma$ is deterministic. We introduce the decomposition $X_t= X_t^{\prime} + X_t^{\prime \prime}$
with 
\begin{align*}
X_t^{\prime} &= \int_0^t g(t-s) \sigma_s dW_s, \\
X_t^{\prime \prime} &= \int_{-\infty}^0 g(t-s) \sigma_s dW_s.
\end{align*}
Similarly, $G_t= G_t^{\prime} + G_t^{\prime \prime}$, where $G^{\prime},G^{\prime \prime}$ are defined 
exactly as $X^{\prime},X^{\prime \prime}$ with $\sigma =1$. Applying the integration theory developed in  
\cite{MV}, we deduce the identity
\[
\int_0^t \sigma_s dG_s^{\prime} = X_t^{\prime} + \int_0^t \left( \int_0^s (\sigma_s - \sigma_u) g'(s-u) W(du) \right) ds.
\]
We remark that the integral on the left side is well-defined
in the Young sense when the assumption (\hyperlink{A3}{A3-$\gamma$}) is satisfied
for some $\gamma \in (1/2-\alpha ,1)$, since the process $G^{\prime}$
has H\"older continuous paths of all orders smaller than $\alpha +1/2$. A straightforward computation shows that the integral
on the right side is well-defined in the It\^o sense under the same condition (\hyperlink{A3}{A3-$\gamma$}) with $\gamma \in (1/2-\alpha ,1)$.
   
The second part $X^{\prime \prime}$ satisfies the differential equation 
\[
d X_t^{\prime \prime} = \left( \int_{-\infty}^0 g'(t-s) \sigma_s W(ds) \right) dt
\]
and similar identity holds for $G^{\prime \prime}$. Notice that the involved Brownian integral is finite due to assumption
(\hyperlink{A1}{A1})(iii) applied to $k=1$. Summarizing these findings we conclude that $X_t$ can be decomposed as
\[
X_t = \int_0^t \sigma_s dG_s + A_t,
\]  
where $A$ is a continuously differentiable process. Thus, the law of large numbers for power variations of $X$ and 
$\int \sigma_s dG_s$ coincide, since the process $A$ is too smooth to affect the limit. However, the central limit 
theorem for the power variation of $X$ may be seriously affected by the presence of the drift $A$. We will study this type
of effects in the next section.

\section{Limit theory for power variations} \label{limit}
\setcounter{equation}{0}
Before we present the main limit theorems for power variation of the $\mathcal {BSS}$ process $X$, let
us introduce some further definitions. 
For elements $Y^n, Y$ from the space of c\'adl\'ag function $\mathbb D([0,T])$, we write $Y^n \ucp Y$
whenever $\sup_{[0,T]} |Y^n_t - Y_t| \toop 0$. We say that a sequence of processes $Y^n$ converges stably in law to
a  process $Y$, where $Y$ is defined on an extension $(\Omega', \mathcal{F}', \mathbb P')$ 
of the original probability $(\Omega, \mathcal{F}, \mathbb P)$, in the space $\mathbb D([0,T])$
equipped with the uniform topology ($Y^n \stab Y$) if and only if
\begin{equation*} 
\lim_{n\rightarrow \infty} \E[f(Y^n) Z] = \E'[f(Y)Z] 
\end{equation*} 
for any bounded and continuous function $f: \mathbb D([0,T]) \rightarrow \R$ and any bounded $\mathcal F$-meas\-urable
random variable $Z$.  We refer to \cite{AE}, \cite{JS} or \cite{R} for a detailed study of stable convergence. 
Note that stable convergence is a stronger mode of convergence than weak convergence, but it is weaker that 
u.c.p. convergence.

First, we recall the law of large numbers for the statistic $\bar{V}(X,p,k,v;\Delta_n)_t$. The corresponding result
for $k=1$ was proved in \cite{BCP11}, while the case $k=2$ was treated in \cite{BCP12}. The extension to a general
$k\geq 1$ is straightforward, and therefore omitted.

\begin{theo} \label{th1}
Assume that the conditions (\hyperlink{A1}{A1}) and (\hyperlink{A2}{A2}) hold. Then we obtain that 
\begin{align} \label{lln}
\bar{V}(X,p,k,v;\Delta_n)_t \ucp V(X,p)_t:= m_p \int_0^t |\sigma_s|^p ds,
\end{align}
where the power variation $\bar{V}(X,p,k,v;\Delta_n)_t$ is defined at \eqref{powervar} and the constant $m_p$ is given
by \eqref{mp}.
\end{theo}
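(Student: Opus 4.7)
The plan is to reduce the claim to a law-of-large-numbers statement for the normalised $k$-th order increments of the Gaussian core $G$ combined with a Riemann-sum approximation using the c\`adl\`ag property of $\sigma$. By a standard stopping-time argument, introducing $T_M = \inf\{t : |\sigma_t|\geq M\}$, I may assume throughout that $\sigma$ is uniformly bounded and that the far-tail integrability in (\hyperlink{A1}{A1})(iii) is deterministic. The proof then proceeds in two stages: freezing the intermittency to reduce to a purely Gaussian object, and then proving the Gaussian LLN.

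\textbf{Freezing step.} Writing the $k$-th order increment as a Wiener integral,
\[
\Delta_{i,k}^{n,v} X = \int_{-\infty}^{i\Delta_n} h_{i,n}^v(s)\, \sigma_s\, W(ds), \qquad h_{i,n}^v(s) := \sum_{j=0}^k (-1)^j \binom{k}{j} g\bigl((i-vj)\Delta_n - s\bigr)\mathbf{1}_{\{s\leq (i-vj)\Delta_n\}},
\]
I would decompose $\Delta_{i,k}^{n,v} X = \sigma_{(i-vk)\Delta_n}\cdot \Delta_{i,k}^{n,v} G + \rR_i^n$, where $\rR_i^n$ is the Wiener integral of $h_{i,n}^v(s)(\sigma_s - \sigma_{(i-vk)\Delta_n})$. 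Using $\|h_{i,n}^v\|_{L^2}^2 = \tau_k(v\Delta_n)^2$ and the identity $\tau_k(v\Delta_n)^2 \sim c\cdot (v\Delta_n)^{2\alpha+1}L_R(v\Delta_n)$ from (\hyperlink{A2}{A2}), I would estimate $\|\rR_i^n\|_{L^2}$ by splitting the integration domain into a ``near'' window of length of order $k\Delta_n$ (where $\sigma$ is close to $\sigma_{(i-vk)\Delta_n}$ by c\`adl\`ag-ness) and a ``far'' tail where the Taylor-type bound $|h_{i,n}^v(s)|\leq C (v\Delta_n)^k |g^{(k)}(i\Delta_n - s)|$ combined with (\hyperlink{A1}{A1})(ii)--(iii) forces rapid decay. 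Summing and normalising yields $\Delta_n \tau_k(v\Delta_n)^{-p}\sum_i |\rR_i^n|^p \ucp 0$.

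\textbf{Gaussian LLN and Riemann sum.} Setting $Z_i^n := \Delta_{i,k}^{n,v} G/\tau_k(v\Delta_n)$, each $Z_i^n$ is standard normal, and by \eqref{rnconv} the stationary correlation sequence $r_{k,n}(\cdot)$ converges to $\rho_k(\cdot)$, which enjoys the polynomial bound $\rho_k(j)=O(j^{2(\alpha+1/2-k)})$. Plugging in the Hermite expansion \eqref{fherm} of $|x|^p - m_p$ and exploiting that $\sigma$ is bounded, the variance of $\Delta_n \sum_i |\sigma_{(i-vk)\Delta_n}|^p\bigl(|Z_i^n|^p - m_p\bigr)$ is bounded by
\[
C\, \Delta_n^2 \sum_{i,j} \sum_{l\geq 2} l!\, a_l^2\, r_{k,n}(i-j)^l \leq C\, t\, \Delta_n \sum_{l\geq 2} l!\, a_l^2 \sum_{|j|\leq t/\Delta_n} |r_{k,n}(j)|^l,
\]
which vanishes as $\Delta_n \to 0$ for any $k\geq 1$ and $\alpha\in(-1/2,1/2)$ since the dominant $l=2$ contribution is $O(\Delta_n^{4(k-\alpha-1/2)\wedge 1})$. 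The remaining deterministic piece $m_p \Delta_n \sum_i |\sigma_{(i-vk)\Delta_n}|^p$ converges to $m_p\int_0^t |\sigma_s|^p ds$ pointwise by c\`adl\`ag-ness, and u.c.p.\ convergence of the full statistic follows from monotonicity in $t$ together with continuity of the limit.

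\textbf{Main obstacle.} The principal difficulty lies in the freezing step: because $G$ is not a semimartingale (Section \ref{sec2-2-1}), the residual $\rR_i^n$ cannot be handled via It\^o-calculus shortcuts and must be controlled directly through the regular-variation structure of $g$, $g^{(k)}$ and $R$ provided by (\hyperlink{A1}{A1})(ii)--(iii) and (\hyperlink{A2}{A2}). The tail estimate in particular depends in an essential way on the Taylor bound for $h_{i,n}^v$ together with $g^{(k)}\in L^2((\varepsilon,\infty))$, and is what makes the higher-order setting ($k\geq 1$) tractable in the same way as the $k=1$ case treated in \cite{BCP11}.
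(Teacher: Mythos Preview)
The paper does not actually prove Theorem~\ref{th1}: it is presented as a recollection, with \cite{BCP11} cited for $k=1$, \cite{BCP12} for $k=2$, and the extension to general $k\geq 1$ declared ``straightforward, and therefore omitted''. Your outline --- localise, freeze $\sigma$ at the left endpoint of each increment, reduce to a Gaussian LLN via the Hermite expansion and the decay of $r_{k,n}(\cdot)$, and finish with a Riemann-sum argument using monotonicity in $t$ --- is precisely the method of those references, and is also mirrored in the blocking scheme the paper does spell out for Theorem~\ref{th3} in Appendix~\ref{proofs}.

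One point in the freezing step needs adjustment. You take the ``near'' window to have length of order $k\Delta_n$ and apply the Taylor bound on the complementary ``far'' tail. But the Taylor estimate for the $k$-th order difference controls $|h_{i,n}^v(s)|$ by $(v\Delta_n)^k\sup_{\xi\in[u,\,u+vk\Delta_n]}|g^{(k)}(\xi)|$ with $u=(i-vk)\Delta_n-s>0$, and since $|g^{(k)}(x)|\sim x^{\alpha-k}$ blows up at $0$, this sup is essentially $|g^{(k)}(u)|$ rather than $|g^{(k)}(i\Delta_n-s)|$; integrating $(v\Delta_n)^{2k}|g^{(k)}(u)|^2$ from $u=0$ produces a quantity of the \emph{same} order as $\tau_k(v\Delta_n)^2$, not smaller (indeed $g^{(k)}\notin L^2((0,\infty))$, only $g^{(k)}\in L^2((\varepsilon,\infty))$ by (\hyperlink{A1}{A1})(ii)). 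Hence bounding $|\sigma_s-\sigma_{(i-vk)\Delta_n}|$ by $2M$ on that tail does not yield $\|\rR_i^n\|_{L^2}=o(\tau_k(v\Delta_n))$. The remedy in \cite{BCP11,BCP12} is to split at a \emph{fixed} intermediate scale $\varepsilon>0$: on $[(i-vk)\Delta_n-\varepsilon,\,i\Delta_n]$ use only $\|h_{i,n}^v\|_{L^2}=\tau_k(v\Delta_n)$ together with the c\`adl\`ag modulus of $\sigma$ at scale $\varepsilon$; on $(-\infty,(i-vk)\Delta_n-\varepsilon)$ the Taylor bound combined with $g^{(k)}\in L^2((\varepsilon,\infty))$ and (\hyperlink{A1}{A1})(iii) gives a contribution $O((v\Delta_n)^{2k})=o(\tau_k(v\Delta_n)^2)$ since $2k>2\alpha+1$. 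One then sends $\varepsilon\downarrow 0$ after $n\to\infty$. With this correction your argument goes through.
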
 

We remark that the statistic $m_p^{-1}\bar{V}(X,p,k,v;\Delta_n)_t$ is a consistent estimator of the stochastic quantity
$\int_0^t |\sigma_s|^p ds$. However, the computation of this statistic requires the knowledge of the 
parameter $\tau_{k}(v\Delta_n)$, which in turn depends on the weight function $g$. Nevertheless, even when $g$ is unknown,
Theorem \ref{th1} may be applied to estimate the smoothness parameter $\alpha$. We will discuss the estimation procedure
in the following section.

Now, we present the central limit theorem associated with the u.c.p. convergence at \eqref{lln}. 
More precisely, we state the central limit theorem for the properly standardized vector
$(\bar{V}(X,p,k,1;\Delta_n), \bar{V}(X,p,k,2;\Delta_n))$, since it will be required in the next section.  The proof for $k=1$ (resp. $k=2$)
can be found in \cite{BCP11} (resp. \cite{BCP12}). The extension to the general case of $k\geq 1$ follows along
the lines of the proof in \cite{BCP12}.  

\begin{theo} \label{th2}
Assume that the conditions (\hyperlink{A1}{A1}), (\hyperlink{A2}{A2}) hold and (\hyperlink{A3}{A3-$\gamma$}) is satisfied for some $\gamma \in (0,1)$
with $\gamma (p\wedge 1) > 1/2$. If $k=1$ we further assume that $\alpha \in (-\frac 12,0)$. Then we obtain the stable convergence
\begin{align} \label{clt}
\Delta_n^{-1/2} \Big(\bar{V}(X,p,k,1;\Delta_n)_t - V(X,p)_t,
\bar{V}(X,p,k,2;\Delta_n)_t - V(X,p)_t  \Big) \stab  \int_0^t |\sigma_s|^p \Lambda_p ~dB_s
\end{align}
on $\mathbb D^2([0,T])$ equipped with the uniform topology, where 
$B$ is a $2$-dimensional Brownian motion that is defined on an
extension of the original probability space $(\Omega, \mathcal{F}, \mathbb P)$ and is independent of $\mathcal{F}$, 
and the matrix $\Lambda_p=(\lambda_p^{ij})_{1\leq i,j\leq 2}$ is  given by
\begin{align} 
\lambda_p^{11} &= \lim_{n\rightarrow \infty } \De_n^{-1} \mathrm{var} \Big( \bar{V}(B^H,p,k,1;\Delta_n)_1\Big), \qquad 
\lambda_p^{22} = \lim_{n\rightarrow \infty } \De_n^{-1} \mathrm{var} \Big( \bar{V}(B^H,p,k,2;\Delta_n)_1\Big) \nonumber \\
\label{lambda} 
\lambda_p^{12} &= \lim_{n\rightarrow \infty } \De_n^{-1} \mathrm{cov} \Big( \bar{V}(B^H,p,k,1;\Delta_n)_1, 
\bar{V}(B^H,p,k,2;\Delta_n)_1 \Big). 
\end{align}
with $B^H$ being a fractional Brownian motion with Hurst parameter $H=\alpha + 1/2$.
\end{theo}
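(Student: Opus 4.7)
My approach is a blocking/approximation scheme that reduces the statement to a joint vector-valued Breuer--Major type CLT for Hermite-rank-$2$ functionals of the stationary Gaussian bi-sequence obtained from the core $G$. The heuristic is that within a mesoscopic block on which $\si$ is essentially constant, $\De_{i,k}^{n,v}X\approx \si_{t_m}\De_{i,k}^{n,v}G$, and the bi-sequence of normalized increments $(\tau_k(v\De_n)^{-1}\De_{i,k}^{n,v}G)_{v=1,2}$ has covariance converging entrywise to that of the $k$-th order increments of $B^H$ with $H=\al+1/2$, by the argument leading to \eqref{rnconv}. This builds the matrix $\La_p$ of \eqref{lambda} into the limit.

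Concretely, after a standard stopping-time localization reducing to bounded $\si$, I would partition $[0,t]$ into blocks of length $u_n\De_n$ with $u_n\to\infty$ and $u_n\De_n\to 0$, and on the $m$-th block write
\[
\De_{i,k}^{n,v}X = \si_{t_m}\De_{i,k}^{n,v}G + \eta_{i,m,v}^n,
\]
estimating $L^q$-norms of $\eta_{i,m,v}^n$ using (\hyperlink{A2}{A2}) and (\hyperlink{A3}{A3-$\gamma$}); the hypothesis $\ga(p\wedge 1)>1/2$ is exactly what is needed to make the aggregated residual $o_P(\De_n^{1/2}\tau_k(v\De_n)^p)$ after summation. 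Within each block I would then apply a joint Breuer--Major type CLT to
\[
\De_n^{-1/2}\sum_i\bigl(f(\tau_k(\De_n)^{-1}\De_{i,k}^{n,1}G),\, f(\tau_k(2\De_n)^{-1}\De_{i,k}^{n,2}G)\bigr),
\]
with $f(x)=|x|^p-m_p$ of Hermite rank $2$ by \eqref{fherm}. The summability $\sum_j \rho_k(j)^2<\infty$ required for the CLT holds for $k\geq 2$ and all $H\in(0,1)$, and for $k=1$ only when $H<3/4$. Summing over blocks weighted by $|\si_{t_m}|^p$ converts the block-wise Gaussian limit into a Riemann sum approximating $\int_0^t|\si_s|^p\La_p\,dB_s$; stable convergence towards the $\f$-independent Brownian motion $B$ follows by the characteristic-function argument conditional on $\si$, exploiting the asymptotic block-wise independence of the Gaussian pieces.

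The main technical obstacle, and the reason for the restriction $\al<0$ in the case $k=1$, is the control of the drift process $A$ in the decomposition $X_t=\int_0^t\si_s\,dG_s+A_t$ of Section~\ref{integralpr}. Since $A$ is continuously differentiable, $|\De_{i,k}^{n,1}A|=O(\De_n)$ only, while for $k\geq 2$ the higher-order differencing delivers extra powers of $\De_n$. After dividing by $\tau_k(v\De_n)^p\sim (v\De_n)^{p(\al+1/2)}L_R(\De_n)^{p/2}$, summing, and multiplying by $\De_n^{-1/2}$, the drift contribution is negligible for all $\al\in(-\tfrac12,\tfrac12)\setminus\{0\}$ when $k\geq 2$, but only when $\al<0$ when $k=1$; combined with the Breuer--Major summability constraint this yields precisely the hypothesis of the theorem. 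Carrying out these bounds uniformly in $t\in[0,T]$, jointly with the blocking approximation and the Young-type integral estimates from Section~\ref{integralpr}, is the computational heart of the argument, done in detail in \cite{BCP11,BCP12}.
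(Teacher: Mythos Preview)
Your proposal is correct and follows essentially the same blocking-plus-freezing route as \cite{BCP11,BCP12} (visible also in the appendix proof of Theorem~\ref{th3}). Two small differences are worth recording. First, those proofs use blocks of \emph{fixed} length $\ell^{-1}$ together with a double limit $n\to\infty$ then $\ell\to\infty$, rather than your shrinking mesoscopic blocks $u_n\De_n\to 0$; the Gaussian CLT for the core $G$ is then obtained via the chaos expansion and the Nualart--Peccati fourth-moment criterion (Theorem~5 of \cite{BCP09}) rather than a direct Breuer--Major computation, which makes the joint treatment of the two frequencies and the covariance identification \eqref{lambda} somewhat cleaner. Second, your explanation of the constraint $\al<0$ for $k=1$ via the drift $A$ of Section~\ref{integralpr} is the right heuristic (and the paper itself invokes it in the remark following Lemma~\ref{lem1}), but that integral decomposition is only derived for \emph{deterministic} $\si$; in the actual arguments of \cite{BCP11,BCP12} the constraint emerges from direct estimates on the remainder $\eta^n_{i,m,v}$ in your approximation $\De_{i,k}^{n,v}X=\si_{t_m}\De_{i,k}^{n,v}G+\eta^n_{i,m,v}$, specifically from the long-memory piece $\int_{-\infty}^{(i-k)\De_n}\{\,\cdot\,\}\si_s\,W(ds)$, which first-order differencing cannot suppress when $\al>0$.
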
 

\begin{rem} \rm \label{rem1}
We remark that the constants $\lambda_p^{ij}$ defined at \eqref{lambda} are indeed finite. For instance, it holds that 
\[
\lambda_p^{11} = \sum_{l=2}^\infty l! a_l^2 \Big(1+ 2 \sum_{j=1}^\infty \rho_k(j)^l \Big),
\]
where $(a_l)_{l\geq 2}$ are Hermite coefficients that appear in the Hermite expansion in \eqref{fherm} and 
the correlation function $\rho_k(j)$ is defined at \eqref{rnconv}. The finiteness of $\lambda_p^{11}$ follows directly
from $|\rho_k(j)|\leq 1$, $\sum_{j=1}^\infty \rho_k(j)^2<\infty$ (see section \ref{asycorr}) and the identity
\begin{equation}\label{variance}
\infty > \mbox{var}[|U|^{p}] = m_{2p}-m^2_p = \sum_{l=2}^\infty l! a_l^2, \qquad U\sim \mathcal N(0,1).
\end{equation}
\qed
\end{rem}

\begin{rem} \label{rem2}
There are various multivariate extensions of Theorem \ref{th2}. We refer to \cite{BCP11,BCP12} for joint
stable central limit theorems for the family 
\[
\Delta_n^{-1/2} \left( V(X,p_j,k,v_j;\Delta_n)_t - V(X,p_j)_t \right)_{1\leq j\leq d},
\]
where $(p_j)_{1\leq j\leq d}$ are positive powers and $(v_j)_{1\leq j\leq d}$ are natural numbers. Such joint
central limit theorems are important, because our estimator of the smoothness parameter $\alpha$ is a ratio 
of power variation statistics compared at two different frequencies ($\Delta_n$ and $2\Delta_n$). \qed  
\end{rem}
We remark that taking higher order difference leads to a central limit theorem for all values of $\alpha$
in the interval $(-\frac 12, 0)\cup (0, \frac 12)$, while the convergence at \eqref{clt} for $k=1$ holds 
only when $\alpha \in (-\frac 12, 0)$. This effect of higher order filters is well-known for Gaussian processes;
see e.g. \cite{IL}. Another advantage of higher order differences is its robustness to smooth distortions. Let 
$C^{m}([0,T])$ denote the class of functions $f$ that are $[m]$ times continuously differentiable with
$f^{([m])}$ being H\"older continuous of order $m-[m]$. We obtain the following lemma.

\begin{lem} \label{lem1}
Let $(A_t)_{t\in [0,T]}$ be a stochastic process with paths in $C^{m}([0,T])$ for some $m\geq 1$. Consider 
the process
\[
Z_t= X_t + A_t.
\]
Under the assumptions (\hyperlink{A1}{A1}) and (\hyperlink{A2}{A2}) it holds that
\begin{align} \label{robust}
|\bar{V}(Z,p,k,1;\Delta_n)_t - \bar{V}(X,p,k,1;\Delta_n)_t| 
= O_{\mathbb P}(|\Delta_n^{k\wedge m -\alpha -1/2} L(\Delta_n)|^{p\wedge 1}),
\end{align}
where $L: \R^+ \rightarrow \R$ is a slowly varying function. In particular, the rate on the right hand side converges to $0$.  
\end{lem}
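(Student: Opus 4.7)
The plan is to reduce the claim to two ingredients: a sharp deterministic bound on the $k$-th order differences of $A$, and the standard elementary inequalities for $|\cdot|^p$, combined with the scaling of the normalization $\tau_k(\Delta_n)$ and (when $p>1$) with Theorem \ref{th1} applied at the exponent $p-1$. First I would record that under (\hyperlink{A2}{A2})(i), a direct expansion of $\tau_k(\Delta_n)^2=\E[(\Delta_{1,k}^nG)^2]$ as a double sum of variogram values yields $\tau_k(\Delta_n)=\Delta_n^{\alpha+1/2}\widetilde L(\Delta_n)$ for some slowly varying $\widetilde L$, so that $\Delta_n\tau_k(\Delta_n)^{-p}=\Delta_n^{1-p(\alpha+1/2)}\widetilde L(\Delta_n)^{-p}$. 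Second, I would establish the deterministic estimate
\begin{equation*}
|\Delta_{i,k}^{n}A|\le C(\om)\,\Delta_n^{k\wedge m}
\end{equation*}
by induction on $k$. For $k=1$ this is just $\|A'\|_\infty\Delta_n$. For the inductive step I write $\Delta_{i,k}^{n}A=\Delta^{k-1}_{\Delta_n}(\Delta_{\Delta_n}A)$ and use $\Delta_{\Delta_n}A(x)=\int_0^{\Delta_n}A'(x+s)\,ds$, interchanging the outer difference with the integral; the inductive hypothesis applied to $A'\in C^{m-1}$ then gives $|\Delta^{k-1}_{\Delta_n}A'|\le C\Delta_n^{(k-1)\wedge(m-1)}$, which after multiplication by $\Delta_n$ produces $C\Delta_n^{k\wedge m}$. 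The H\"older part (fractional $m-[m]$) enters at the base case $k-[m]=1$.

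Next I would split the analysis at $p=1$. For $p\le 1$ the elementary inequality $||x+y|^p-|x|^p|\le |y|^p$ yields
\begin{equation*}
\big|V(Z,p,k,1;\Delta_n)_t-V(X,p,k,1;\Delta_n)_t\big|\le\sum_{i=k}^{[t/\Delta_n]}|\Delta_{i,k}^{n}A|^p \le C\,\Delta_n^{p(k\wedge m)-1},
\end{equation*}
and multiplication by $\Delta_n\tau_k(\Delta_n)^{-p}$ directly gives $O_\PP(|\Delta_n^{k\wedge m-\alpha-1/2}L(\Delta_n)|^p)$, which equals the claimed rate since $p\wedge 1=p$. For $p>1$, I would use $||x+y|^p-|x|^p|\le C_p(|x|^{p-1}|y|+|y|^p)$ and bound the two resulting sums separately: the $|y|^p$ part is identical to the previous case and contributes $O(|\Delta_n^{k\wedge m-\alpha-1/2}L|^p)$; the cross term is controlled by
\begin{equation*}
C\Delta_n^{k\wedge m}\sum_{i}|\Delta_{i,k}^{n}X|^{p-1}=O_\PP\!\left(\Delta_n^{k\wedge m-1}\tau_k(\Delta_n)^{p-1}\right),
\end{equation*}
where the last step invokes Theorem \ref{th1} applied to the power $p-1>0$ to obtain $\sum_i|\Delta_{i,k}^{n}X|^{p-1}=O_\PP(\Delta_n^{-1}\tau_k(\Delta_n)^{p-1})$. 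After multiplying by $\Delta_n\tau_k(\Delta_n)^{-p}$ this cross term becomes $O_\PP(\Delta_n^{k\wedge m-(\alpha+1/2)}L(\Delta_n)^{-1})$. Because $k\wedge m\ge 1>\alpha+1/2$ the exponent is positive, and the cross term dominates the $|y|^p$ contribution when $p>1$, producing the exponent $p\wedge 1=1$ in that regime.

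The main technical obstacle is the sharp bound $|\Delta_{i,k}^{n}A|\le C\Delta_n^{k\wedge m}$ in the mixed range where $m$ is non-integer and smaller than $k$: a naive iteration of the triangle inequality only yields $\Delta_n^m$ with a suboptimal bound on the constant, and one must use the integral representation of first differences together with a clean induction carrying the H\"older exponent through. Once this estimate is in hand, the rest is bookkeeping of powers of $\Delta_n$ and of the slowly varying factor $\widetilde L$, and the fact that any product of slowly varying functions raised to fixed powers is again slowly varying justifies lumping all logarithmic corrections into a single $L$, yielding the statement exactly as written.
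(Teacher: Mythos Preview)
Your proposal is correct and follows essentially the same route as the paper: the same increment bound $|\Delta_{i,k}^nA|\le C\Delta_n^{k\wedge m}$, the same split at $p=1$ using subadditivity of $|\cdot|^p$ for $p\le 1$, and for $p>1$ a mean-value-type inequality combined with Theorem~\ref{th1} at exponent $p-1$. The only cosmetic differences are that you spell out the induction for the increment bound on $A$ (the paper simply asserts it) and you write the $p>1$ inequality as $C_p(|x|^{p-1}|y|+|y|^p)$ rather than the paper's $p(|x|+|y|)^{p-1}|y|$, which are equivalent up to constants.
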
 
\begin{proof}
The condition $A\in C^{m}([0,T])$ implies that 
\[
|\Delta_{i,k}^n A|\leq C \Delta_n^{k\wedge m}
\]
for some constant $C>0$. On the other hand we have that 
\[
\tau_{k}(\Delta_n) = \Delta_n^{\alpha + 1/2} L(\Delta_n)
\]
for some slowly varying function $L$, due to assumption (\hyperlink{A2}{A2})(i). Assume that $p\in (0,1]$. Then we deduce 
\begin{multline*} 
|\bar{V}(Z,p,k,1;\Delta_n)_t - \bar{V}(X,p,k,1;\Delta_n)_t| \\ \leq 
\Delta_n \tau_{k}(\Delta_n)^{-p}\sum_{i=k}^{[t/\Delta_n]} |\Delta_{i,k}^n A|^p
 = O_{\mathbb P}(|\Delta_n^{k\wedge m}\tau_{k}(\Delta_n)^{-1}|^{p}),
\end{multline*} 
which implies the assertion of Lemma \ref{lem1} for $p\in (0,1]$. For $p>1$ we apply the mean value theorem 
to conclude that 
\begin{multline*}
|\bar{V}(Z,p,k,1;\Delta_n)_t - \bar{V}(X,p,k,1;\Delta_n)_t| \\ \leq 
p\Delta_n \tau_{k}(\Delta_n)^{-p}\sum_{i=k}^{[t/\Delta_n]} 
(|\Delta_{i,k}^n A|+ |\Delta_{i,k}^n X|)^{p-1} |\Delta_{i,k}^n A|.
\end{multline*}
Using that   
$$\Delta_n \tau_{k}(\Delta_n)^{-(p-1)}\sum_{i=k}^{[t/\Delta_n]} |\Delta_{i,k}^n X|^{p-1} \ucp m_{p-1} \int_0^t |\sigma_s|^{p-1} ds$$
and $|\Delta_{i,k}^n A|\leq C \Delta_n^{k\wedge m}$ with $m\geq 1$, we deduce the approximation  
\[
|\bar{V}(Z,p,k,1;\Delta_n)_t - \bar{V}(X,p,k,1;\Delta_n)_t| 
= O_{\mathbb P}(\Delta_n^{k\wedge m}\tau_{k}(\Delta_n)^{-1}),
\]  
which implies the assertion of Lemma \ref{lem1} for $p>1$.
\end{proof} 
We remark that the degree of robustness to drift processes $A\in C^{m}([0,T])$ is increasing in $k$. In fact,  
for $k=[m]+1$ the right side of \eqref{robust} converges to $0$ at the fastest rate. Since $m\geq 1$, the statistics 
$\bar{V}(Z,p,k,1;\Delta_n)_t$ and $\bar{V}(X,p,k,1;\Delta_n)_t$ are asymptotically equivalent. They also satisfy
the same central limit theorem given $(k\wedge m -\alpha -1/2)(p\wedge 1)>1/2$. For instance, when $k=1$ and $p\geq 1$, 
the quantity $\bar{V}(Z,p,k,1;\Delta_n)_t$ satisfies Theorem \ref{th2} only for $\alpha \in (-1/2,0)$.

\begin{rem} \rm
As we mentioned in section \ref{integralpr}, when $\sigma$ is deterministic and (\hyperlink{A3}{A3-$\gamma$}) is satisfied for
some $\gamma \in (1/2-\alpha ,1)$ (this means that $\sigma$ is H\"older continuous of order $\gamma \in (1/2-\alpha ,1)$),
we have the decomposition 
\[
X_t = \int_0^t \sigma_s dG_s + A_t =: \widetilde{X}_t+A_t
\] 
with $A\in C^{1}([0,T])$. For $k=1$ the central limit theorem for the statistics 
$\bar{V}(\widetilde{X},p,k,1;\Delta_n)_t$ holds for $\alpha \in (-\frac{1}{2},0) \cup (0, \frac 14)$
(see e.g. \cite{BCP09}), which is in line with the discussion
in section \ref{asycorr}. However, when $k=1$, Theorem \ref{th2} holds for the quantity
$\bar{V}(X,p,k,1;\Delta_n)_t$ only if $\alpha \in (-\frac{1}{2},0)$. This follows from the result of Lemma \ref{lem1},
although the rate in \eqref{robust} is by no means sharp. \qed    
\end{rem}

\section{Estimation of the smoothness parameter} \label{statistics}
\setcounter{equation}{0}

We apply the asymptotic theory presented in the previous section to construct consistent and asymptotically normal estimators of the smoothness parameter $\alpha$. 
We consider mainly the specification
\begin{equation}\label{gammakernel}
g(x) = x^\alpha \exp(-\lambda x),\qquad x>0,
\end{equation}
where $\lambda>0$ is another parameter, but some of the results apply also to a general $g$ that merely satisfies conditions (\hyperlink{A1}{A1}) and (\hyperlink{A2}{A2}). 
We use the change-of-frequency statistic based on second order increments
\[
\cof(p,\Delta_n)_t = \frac{V(X,p,2, 2;\Delta_n)_t}{V(X,p,2,1;\Delta_n)_t},\qquad t>0,
\]
with $p>0$, as a foundation for our first estimator. To understand the asymptotic behavior of $\cof(p,\Delta_n)_t$ we write
\[
\cof(p,\Delta_n)_t = \bigg(\frac{\tau_2(2\Delta_n)^2}{\tau_2(\Delta_n)^2}\bigg)^{p/2}\frac{\bar{V}(X,p,2, 2;\Delta_n)_t}{\bar{V}(X,p,2,1;\Delta_n)_t}.
\]
Under Assumption (\hyperlink{A2}{A2}), we have
\[
\frac{\tau_2(2 \Delta_n)^2}{\tau_2(\Delta_n)^2} = \frac{4 R(2\Delta_n)-R(4\Delta_n)}{4 R(\Delta_n)-R(2\Delta_n)} \rightarrow 2^{2 \alpha +1},
\]
so by Theorem \ref{th1}, the following consistency result is immediate.
\begin{prop} \label{propest} Under the conditions of Theorem \ref{th1} we have for any $p>0$,
\[
\cof(p,\Delta_n)_t \ucp 2^{\frac{(2\alpha +1)p}{2}}
\] 
and, consequently,
\begin{equation}\label{cofconsistency}
\hat{\alpha}(p,\Delta_n,t) := h_p\big(\cof(p,\Delta_n)_t\big) \ucp \alpha,
\end{equation}
where
\[
h_p(x) = \frac{\log_2(x)}{p}- \frac{1}{2}, \qquad x>0,
\]
with $\log_2$ standing for the base-$2$ logarithm.
\end{prop}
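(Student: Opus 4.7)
The plan is to reduce Proposition \ref{propest} to two ingredients: the u.c.p.\ law of large numbers (Theorem \ref{th1}) applied jointly at frequencies $\Delta_n$ and $2\Delta_n$, and the deterministic asymptotics of the normalising constants $\tau_2(v\Delta_n)$.

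First I would factor the statistic exactly as done in the paragraph preceding the proposition,
\begin{equation*}
\cof(p,\Delta_n)_t = \left(\frac{\tau_2(2\Delta_n)^2}{\tau_2(\Delta_n)^2}\right)^{p/2}
\frac{\bar V(X,p,2,2;\Delta_n)_t}{\bar V(X,p,2,1;\Delta_n)_t},
\end{equation*}
which is legitimate since $\bar V(X,p,2,v;\Delta_n)_t$ equals $\Delta_n \tau_2(v\Delta_n)^{-p} V(X,p,2,v;\Delta_n)_t$ by definition \eqref{powervar}. Applying Theorem \ref{th1} to both $v=1$ and $v=2$, the numerator and denominator of the random factor each converge u.c.p.\ to the same limit $V(X,p)_t=m_p\int_0^t|\sigma_s|^p\,ds$. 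On the event $\{V(X,p)_t>0\}$ (which is the only event of statistical interest; otherwise the estimator is defined arbitrarily), the continuous-mapping theorem for u.c.p.\ convergence shows that the ratio converges u.c.p.\ to $1$.

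Next I would treat the deterministic prefactor. Expanding the second-order Gaussian increment via the variogram identity \eqref{R} yields
\begin{equation*}
\tau_2(v\Delta_n)^2 = \mathbb{E}\bigl[|\Delta^{n,v}_{i,2} G|^2\bigr] = 4R(v\Delta_n)-R(2v\Delta_n).
\end{equation*}
Plugging in $R(x)=x^{2\alpha+1}L_R(x)$ from (\hyperlink{A2}{A2})(i), dividing numerator and denominator by $\Delta_n^{2\alpha+1}L_R(\Delta_n)$, and using the slow variation $L_R(a\Delta_n)/L_R(\Delta_n)\to 1$ for any fixed $a>0$, a direct computation gives
\begin{equation*}
\frac{\tau_2(2\Delta_n)^2}{\tau_2(\Delta_n)^2}
= \frac{4\cdot 2^{2\alpha+1}L_R(2\Delta_n) - 2^{2(2\alpha+1)}L_R(4\Delta_n)}{4L_R(\Delta_n)-2^{2\alpha+1}L_R(2\Delta_n)}
\longrightarrow \frac{2^{2\alpha+1}(4-2^{2\alpha+1})}{4-2^{2\alpha+1}} = 2^{2\alpha+1},
\end{equation*}
where the denominator of the limit is nonzero because $2\alpha+1\neq 2$ for $\alpha\in(-\tfrac12,0)\cup(0,\tfrac12)$. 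Raising to the power $p/2$ produces the claimed limit $2^{(2\alpha+1)p/2}$ for $\cof(p,\Delta_n)_t$.

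Finally, since $h_p(x)=p^{-1}\log_2(x)-\tfrac12$ is continuous on $(0,\infty)$ and the limit $2^{(2\alpha+1)p/2}$ is strictly positive, a second application of the continuous mapping theorem for u.c.p.\ convergence gives $\hat\alpha(p,\Delta_n,t)\ucp h_p(2^{(2\alpha+1)p/2})=\alpha$, establishing \eqref{cofconsistency}. The main subtlety I anticipate is the handling of the slowly varying factors: the cancellation $L_R(2\Delta_n)/L_R(\Delta_n)\to 1$ must be invoked uniformly enough to pass through the ratio, but because it is a purely deterministic scalar limit and all prefactors are positive, this is routine. The stochastic content of the proof is confined to quoting Theorem \ref{th1}; no central limit argument is needed here.
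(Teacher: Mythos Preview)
Your proposal is correct and follows exactly the paper's approach: the paper factors $\cof(p,\Delta_n)_t$ into the deterministic ratio $(\tau_2(2\Delta_n)^2/\tau_2(\Delta_n)^2)^{p/2}$ times the ratio of normalised power variations, uses $\tau_2(v\Delta_n)^2=4R(v\Delta_n)-R(2v\Delta_n)$ together with assumption (\hyperlink{A2}{A2})(i) to obtain the limit $2^{2\alpha+1}$, and then invokes Theorem \ref{th1} to handle the stochastic ratio. Your write-up simply supplies the details (the slow-variation cancellation and the continuous-mapping step for $h_p$) that the paper leaves implicit when it declares the result ``immediate''.
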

Note that, since \eqref{cofconsistency} holds for any $p>0$, computing the change-of-frequency statistics for different values of $p$ can be used to gauge the robustness of the estimates of $\alpha$.

Next we derive a feasible, asymptotically normal test statistic in the case $p \geq 2$ (see, however, Remark \ref{plessthantwo} below). To this end, let us consider the decomposition
\begin{multline}\label{cofdecomp}
\Delta^{-1/2}_n \big(\cof(p,\Delta_n)_t - 2^{\frac{(2\alpha +1)p}{2}}\big) \\
\begin{aligned}
& = \bigg(\frac{\tau_2(2\Delta_n)^2}{\tau_2(\Delta_n)^2}\bigg)^{p/2} \bigg( \frac{\Delta^{-1/2}_n\big(\bar{V}(X,p,2,2;\Delta_n)_t-V(X,p)_t\big)}{V(X,p)_t} \\
& \qquad - \frac{\bar{V}(X,p,2,2;\Delta_n)_t}{\bar{V}(X,p,2,1;\Delta_n)_t} \frac{\Delta^{-1/2}_n\big(\bar{V}(X,p,2,1;\Delta_n)_t-V(X,p)_t\big)}{V(X,p)_t}\bigg) \\
& \qquad + \Delta^{-1/2}_n \Bigg(\bigg(\frac{\tau_2(2\Delta_n)^2}{\tau_2(\Delta_n)^2}\bigg)^{p/2} - \big(2^{2\alpha+1}\big)^{p/2} \Bigg).
\end{aligned}
\end{multline}
When $g$ is given by \eqref{gammakernel}, one can establish the estimate
\begin{equation}\label{bias}
\Delta^{-1/2}_n\bigg(\frac{\tau_2(2\Delta_n)^2}{\tau_2(\Delta_n)^2} - 2^{2\alpha+1}\bigg) = O(\Delta^{1/2-2\alpha}_n),
\end{equation}
which is sharp in the sense the big $O$ cannot be replaced with a little $o$; we refer to section 5.2 in \cite{BCP11WP}
for the justification of \eqref{bias}.
Thus, due to the inequality $|x^r-y^r| \leq r(x^{r-1}\vee y^{r-1}) |x-y|$, where $x$, $y\geq0$ and $r\geq 1$, the latter term on right-hand side (r.h.s.)\ of \eqref{cofdecomp} is asymptotically negligible when $\alpha \in (-\frac{1}{2},0) \cup (0,\frac{1}{4})$ and $p \geq 2$. 
Applying Theorem \ref{th2}, the former term on r.h.s.\ of \eqref{cofdecomp} converges stably in law to
\[
\frac{2^{\frac{(2\alpha+1)p}{2}} (-1,1) \int_0^t |\sigma_s|^p \Lambda^{1/2}_p d B_s}{V(X,p)_t}.
\]
Invoking the delta method and the basic properties of stable convergence, we arrive at an asymptotically normal test statistic given as follows.

\begin{prop}\label{cofnorm} If the conditions of Theorem \ref{th2} hold and $g$ is given by \eqref{gammakernel} with $\alpha \in (-\frac{1}{2},0) \cup (0,\frac{1}{4})$, then for any $p \geq 2$, we have 
\[
\frac{(\hat{\alpha}(p,\Delta_n,t)-\alpha) V(X,p,2,1;\Delta_n)_t}{|h'_p(\cof(p,\Delta_n)_t)| \cof(p,\Delta_n)_t \sqrt{ m^{-1}_{2p}V(X,2p,2,1;\Delta_n) (-1,1) \Lambda_p (-1,1)^T }} \schw N(0,1).
\]
\end{prop}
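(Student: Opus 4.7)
The plan is to combine the decomposition \eqref{cofdecomp} with the delta method and then studentise by replacing the asymptotic variance with consistent estimators, exploiting stable convergence in the Slutsky step.

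First, I would observe that the consistency in Proposition \ref{propest} together with the continuity of $h_p$ gives
\[
h_p'(\cof(p,\Delta_n)_t) \toop h_p'(2^{(2\alpha+1)p/2}) = \frac{1}{p \log 2 \cdot 2^{(2\alpha+1)p/2}}, \qquad \cof(p,\Delta_n)_t \toop 2^{(2\alpha+1)p/2}.
\]
Next, I would analyse the decomposition \eqref{cofdecomp}. The deterministic bias term on its right-hand side is $O(\Delta_n^{1/2-2\alpha})$ by \eqref{bias} combined with the mean-value inequality $|x^{p/2}-y^{p/2}| \leq (p/2)(x^{p/2-1}\vee y^{p/2-1})|x-y|$ applied with $p/2 \geq 1$; since $\alpha \in (-\tfrac12,0)\cup(0,\tfrac14)$ implies $1/2-2\alpha>0$, this term is $o(1)$. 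For the stochastic term I would invoke Theorem \ref{th2} to obtain the joint stable convergence of the pair
\[
\Delta_n^{-1/2}\bigl(\bar{V}(X,p,2,1;\Delta_n)_t - V(X,p)_t,\ \bar{V}(X,p,2,2;\Delta_n)_t - V(X,p)_t\bigr) \stab \int_0^t |\sigma_s|^p \Lambda_p^{1/2} dB_s,
\]
divide by $V(X,p)_t>0$, and use $\bar{V}(X,p,2,2;\Delta_n)_t/\bar{V}(X,p,2,1;\Delta_n)_t \toop 1$ together with $(\tau_2(2\Delta_n)^2/\tau_2(\Delta_n)^2)^{p/2} \to 2^{(2\alpha+1)p/2}$ to conclude
\[
\Delta_n^{-1/2}\bigl(\cof(p,\Delta_n)_t - 2^{(2\alpha+1)p/2}\bigr) \stab \frac{2^{(2\alpha+1)p/2}\,(-1,1)\int_0^t |\sigma_s|^p \Lambda_p^{1/2}\,dB_s}{V(X,p)_t}.
\]

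The delta method applied to $h_p$, combined with the above display, yields
\[
\Delta_n^{-1/2}(\hat{\alpha}(p,\Delta_n,t)-\alpha) \stab h_p'(2^{(2\alpha+1)p/2})\cdot 2^{(2\alpha+1)p/2}\cdot \frac{(-1,1)\int_0^t |\sigma_s|^p \Lambda_p^{1/2}\,dB_s}{V(X,p)_t}.
\]
The right-hand side is $\mathcal{F}$-conditionally centred Gaussian with variance
\[
\Sigma^2_t := \bigl(h_p'(2^{(2\alpha+1)p/2})\bigr)^2\cdot 2^{(2\alpha+1)p}\cdot \frac{(-1,1)\Lambda_p(-1,1)^T \int_0^t |\sigma_s|^{2p}\,ds}{V(X,p)_t^2}.
\]
To studentise, I would construct a consistent estimator $\widehat{\Sigma}^2_{n,t}$ of $\Delta_n \Sigma^2_t$. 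Theorem \ref{th1} applied to the power $2p$ gives
$\bar{V}(X,2p,2,1;\Delta_n)_t \ucp m_{2p}\int_0^t|\sigma_s|^{2p}ds$, whence $m_{2p}^{-1}\bar{V}(X,2p,2,1;\Delta_n)_t$ estimates $\int_0^t|\sigma_s|^{2p}ds$; similarly $\bar{V}(X,p,2,1;\Delta_n)_t \ucp V(X,p)_t$. Plugging in, and using the key identity
\[
\Delta_n \cdot \frac{\bar{V}(X,2p,2,1;\Delta_n)_t}{\bar{V}(X,p,2,1;\Delta_n)_t^2} = \frac{V(X,2p,2,1;\Delta_n)_t}{V(X,p,2,1;\Delta_n)_t^2}
\]
(which exactly cancels the powers of $\tau_2(\Delta_n)$ and the explicit $\Delta_n$ factor), together with $h_p'(\cof(p,\Delta_n)_t) \toop h_p'(2^{(2\alpha+1)p/2})$ and $\cof(p,\Delta_n)_t \toop 2^{(2\alpha+1)p/2}$, I would verify that the square of the denominator in the proposition equals $\Delta_n \Sigma^2_t$ times a factor converging in probability to $1$. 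Finally, dividing the stably convergent numerator $\Delta_n^{-1/2}(\hat{\alpha}-\alpha)$ by this $\mathcal{F}$-measurable (in the limit) studentiser and applying the basic Slutsky-type property of stable convergence kills the mixing and yields the standard normal limit.

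The main obstacle is the first step, namely controlling the bias term in \eqref{cofdecomp}: it is essential that $p \geq 2$ (so that the mean-value inequality applies with exponent $p/2 \geq 1$) and that $\alpha < 1/4$ (so that $1/2 - 2\alpha > 0$). Everything else is a routine combination of Theorems \ref{th1}--\ref{th2}, the delta method, and properties of stable convergence.
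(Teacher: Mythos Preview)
Your proposal is correct and follows essentially the same route as the paper: the argument preceding Proposition~\ref{cofnorm} in the text is exactly the combination of the decomposition~\eqref{cofdecomp}, the bias control~\eqref{bias} via the mean-value inequality (requiring $p\geq 2$ and $\alpha<1/4$), Theorem~\ref{th2} for the stable limit of the stochastic part, and then the delta method together with the Slutsky property of stable convergence. Your studentisation step, in particular the cancellation identity $\Delta_n\,\bar V(X,2p,2,1;\Delta_n)_t/\bar V(X,p,2,1;\Delta_n)_t^2 = V(X,2p,2,1;\Delta_n)_t/V(X,p,2,1;\Delta_n)_t^2$, makes explicit what the paper leaves implicit; just note that it is the ratio of the denominator to $V(X,p,2,1;\Delta_n)_t$ (not the denominator alone) whose square equals $\Delta_n\Sigma_t^2$ up to a factor tending to~$1$ in probability.
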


\begin{rem}\label{plessthantwo}
If $p \in (\frac{1}{2},2)$, then under the stronger restriction $\alpha \in (-\frac{1}{2},\frac{p-1}{2p}) \setminus \{ 0\}$ it is possible to show that the latter term on right-hand side (r.h.s.)\ of \eqref{cofdecomp} is asymptotically negligible and, thus, the conclusion of Proposition \ref{cofnorm} holds.
\end{rem}

\begin{rem}
An alternative method to estimate the smoothness parameter is using the realized variation ratio statistic. However, it does not allow the same flexibility over the choice of the power $p$, as the change-of-frequency statistic does. We refer to \cite{BCP12} for more details on the realized variation ratio. \qed
\end{rem}

The estimation method described so far has, of course, the drawback that it applies only when $\alpha \in (-\frac{1}{2},0) \cup (0,\frac{1}{4})$. In the case $\alpha \in [\frac{1}{4},\frac{1}{2})$, the latter term on the r.h.s.\ of \eqref{cofdecomp} is non-negligible due to the sharpness of the estimate \eqref{bias}. To cover the critical region $[\frac{1}{4},\frac{1}{2})$, we develop a theory of power variations with gaps. More precisely, we consider for $v=1,2$ modified power variation statistics of the form
\begin{align*} 
V(X,p,k,u_n,v;\Delta_n)_t &:= \begin{cases}{\displaystyle\sum_{i=[k/u_n]+1}^{[t/(u_n\Delta_n)]} |\Delta_{iu_n,k}^n X|^p}, & v= 1, \\ {\displaystyle\sum_{i=[k/u_n]+1}^{[t/(u_n\Delta_n)]-1} |\Delta_{iu_n+[u_n/2],k}^{n,2} X|^p}, & v= 2, \end{cases} \\
\bar{V}(X,p,k,u_n,v;\Delta_n)_t &:= u_n \Delta_n \tau_{k}(v\Delta_n)^{-p} V(X,p,k,u_n,v;\Delta_n)_t,
\end{align*}
where $u_n \in \N$ denotes the size of the gaps. Simply put, the variation $V(X,p,k,u_n,1;\Delta_n)_t$ is computed by taking only every $u_n$-th increment into account, whereas
\begin{equation*}
V(X,p,k,u_n,2;\Delta_n)_t
\end{equation*}
is defined similarly, but choosing increments that fall between those that contribute to $V(X,p,k,u_n;\Delta_n)_t$ via the shift $[u_n/2]$ in the definition. In what follows, we let $u_n \rightarrow \infty$ so that $u_n \Delta_n \rightarrow 0$. We also assume that $n$ is always large enough so that $u_n \geq (4k+2)/3$ (to ensure that the definition above makes sense).

It is straightforward to show that a law of large numbers, a counterpart of Theorem \ref{th1}, holds also for power variation with gaps. That is, if conditions (\hyperlink{A1}{A1}) and (\hyperlink{A2}{A2}) hold, then
\begin{align} \label{llngaps}
\bar{V}(X,p,k,u_n,v;\Delta_n)_t \ucp V(X,p)_t.
\end{align}
for $v=1,2$.
Next, we state a central limit theorem for power variation with gaps, which is actually simpler than Theorem \ref{th2}. This is due to the fact that the widening gaps between the increments make them asymptotically uncorrelated.
\begin{theo} \label{th3}
Assume that the conditions (\hyperlink{A1}{A1}), (\hyperlink{A2}{A2}) hold and (\hyperlink{A3}{A3-$\gamma$}) is satisfied for some $\gamma \in (0,1)$
with $\gamma (p\wedge 1) > 1/2$. Moreover, let $u_n \rightarrow \infty$ so that $u_n \Delta_n \rightarrow 0$. If $k=1$ we further assume that $\alpha \in (-\frac 12,0)$. Then we obtain the stable convergence
\begin{multline*}
(u_n\Delta)_n^{-1/2} \Big(\bar{V}(X,p,k,u_n,1;\Delta_n)_t - V(X,p)_t,
\bar{V}(X,p,k,u_n,2;\Delta_n)_t - V(X,p)_t  \Big) \\ \stab  \sqrt{m_{2p}-m_p^2} \int_0^t |\sigma_s|^p ~dB_s
\end{multline*}
on $\mathbb D^2([0,T])$ equipped with the uniform topology, where 
$B$ is a $2$-dimensional Brownian motion that is defined on an
extension of the original probability space $(\Omega, \mathcal{F}, \mathbb P)$ and is independent of $\mathcal{F}$.
\end{theo}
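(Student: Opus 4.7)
The overall strategy follows the same blocking-and-approximation scheme that underlies Theorem \ref{th2}, but is substantially simplified by the fact that the gaps of size $u_n\to\infty$ render the Gaussian functionals asymptotically uncorrelated, thereby eliminating the long-range dependence phenomena encountered there. The plan is to (i) localize to block-constant $\sigma$, (ii) reduce to the Gaussian core $G$, (iii) prove a Breuer--Major type CLT for Gaussian functionals with gaps, and (iv) upgrade to functional stable convergence.

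First I would partition $[0,t]$ into blocks of length $b_n\to 0$ with $b_n/(u_n\Delta_n)\to\infty$, and freeze $\sigma$ at the left endpoint of each block. The replacement error is controlled using assumption (\hyperlink{A3}{A3-$\gamma$}) via essentially the same estimates as in \cite{BCP11,BCP12}; the bookkeeping is in fact easier here since only every $u_n$-th increment enters the sum. Within each block, the higher-order increments $\Delta_{i,k}^{n,v}X$ are well-approximated by $\sigma_{mb_n}\cdot\Delta_{i,k}^{n,v}G$, up to $C^1$ drift contributions controlled by a gap-variant of Lemma \ref{lem1}. The problem thus reduces to a joint CLT for the Gaussian statistic
\[
S^{n,v}_t := (u_n\Delta_n)^{1/2}\!\!\!\sum_{i=[k/u_n]+1}^{[t/(u_n\Delta_n)]}\!\!\big(|\xi^{n,v}_i|^p - m_p\big),\qquad v=1,2,
\]
where $\xi^{n,1}_i := \tau_k(\Delta_n)^{-1}\Delta_{iu_n,k}^{n}G$ and $\xi^{n,2}_i := \tau_k(2\Delta_n)^{-1}\Delta_{iu_n+[u_n/2],k}^{n,2}G$ are unit-variance stationary Gaussian sequences.

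The core step is the CLT for $(S^{n,1}_t,S^{n,2}_t)$. Writing $|x|^p-m_p=\sum_{l\geq 2} a_l H_l(x)$ as in \eqref{fherm}, the variance of $S^{n,v}_t$ splits by Hermite orthogonality into a diagonal contribution converging to $t\sum_{l\geq 2}l!\,a_l^2 = t(m_{2p}-m_p^2)$ by \eqref{variance}, and an off-diagonal contribution controlled by
\[
C\,(u_n\Delta_n)\sum_{i}\sum_{j\geq 1}\sum_{l\geq 2} l!\,a_l^2\,|r_{k,n}(ju_n)|^l.
\]
From \eqref{rnconv} and the classical bound $|\rho_k(m)|=O(m^{2(H-k)})$ with $H=\alpha+\tfrac{1}{2}$, one has $|r_{k,n}(ju_n)|\leq C(ju_n)^{2(H-k)}$ for $ju_n$ large, so the off-diagonal variance is at most of order $u_n^{2(H-k)}\to 0$; the exclusion of $\alpha\geq 0$ when $k=1$ is precisely what guarantees $H-k<0$. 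An analogous bound shows that the cross-covariance between $S^{n,1}_t$ and $S^{n,2}_t$ vanishes because the shift $[u_n/2]$ separates the relevant Gaussians by distance of order $u_n$. Asymptotic normality then follows either from a direct Lindeberg argument applied to the sub-sequence indexed by $iu_n$ (which is asymptotically independent), or from the fourth-moment theorem on each Wiener chaos.

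Conditionally on $\mathcal F_{mb_n}$, the CLT on the $m$-th block produces a centered Gaussian vector with covariance $(m_{2p}-m_p^2)|\sigma_{mb_n}|^{2p}b_n\cdot I_2$; summing over blocks, and using that blocks are asymptotically independent thanks to the gaps, yields the stochastic integral $\sqrt{m_{2p}-m_p^2}\int_0^t|\sigma_s|^p dB_s$. Stability of the convergence is inherited from the conditionally-Gaussian structure of the limit via the standard criterion in \cite{AE,JS}, and tightness in $\mathbb D^2([0,T])$ under the uniform topology follows from moment bounds analogous to those in \cite{BCP12}. I expect the main obstacle to be the careful bookkeeping of the off-diagonal variance and cross-covariance bounds across all combinations of $v$ and increment frequency, in particular obtaining a uniform-in-$n$ version of the estimate $|r_{k,n}(ju_n)|\leq C(ju_n)^{2(H-k)}$ from the variogram expansion supplied by (\hyperlink{A2}{A2}).
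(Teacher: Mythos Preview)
Your proposal is correct and follows essentially the same route as the paper: the paper also uses a blocking decomposition (with fixed block length $\ell^{-1}$ followed by $\ell\to\infty$ rather than your $b_n\to 0$), reduces to a CLT for the Gaussian core $G$, and proves the latter via the Hermite/chaos expansion together with the Nualart--Peccati fourth-moment criterion (through Theorem~5 of \cite{BCP09}), exploiting precisely the vanishing of $r_{k,n}(ju_n)$ and of the shifted cross-correlations that you identify. The uniform-in-$n$ correlation bound you flag as the main obstacle is handled in the paper by invoking the dominating sequence $\bar r_k(j)$ with $\sum_j \bar r_k(j)^2<\infty$ supplied by Lemma~1 of \cite{BCP09}, which is the clean way to get what you want; note also that your attribution of the restriction $\alpha<0$ for $k=1$ to the condition ``$H-k<0$'' is not quite right (that inequality always holds), so the constraint in fact enters through the approximation of $X$ by $\sigma G$ rather than through the Gaussian CLT itself.
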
 
The proof of Theorem \ref{th3} is very similar to that of Theorem \ref{th1}. We provide a sketch of it, primarily to outline the key differences, in appendix \ref{proofs}.

To construct our second estimator for the smoothness parameter $\alpha$, we look into the properties of the modified change-of-frequency statistic
\[
\cof(p,\Delta_n,u_n)_t = \frac{V(X,p,2,u_n,2;\Delta_n)_t}{V(X,p,2,u_n,1;\Delta_n)_t},
\]
involving power variations with gaps. Due to the law of large numbers \eqref{llngaps}, it is immediate that
\[ 
h_p\big(\cof(p,\Delta_n,u_n)_t\big) \ucp \alpha,
\] 
under conditions (\hyperlink{A1}{A1}) and (\hyperlink{A2}{A2}), where the function $h_p$ is defined in Proposition \ref{propest}. 
The key point of using gaps is that, thanks to the slower rate of convergence $(u_n\Delta_n)^{-1/2}$ in Theorem \ref{th3}, we may write
\begin{multline}\label{cofgapsdecomp}
(u_n\Delta_n)^{-1/2} \big(\cof(p,\Delta_n,u_n)_t - 2^{\frac{(2\alpha +1)p}{2}}\big) \\
\begin{aligned}
& = \bigg(\frac{\tau_2(2\Delta_n)^2}{\tau_2(\Delta_n)^2}\bigg)^{p/2} \bigg( \frac{(u_n\Delta_n)^{-1/2}\big(\bar{V}(X,p,2,u_n,2;\Delta_n)_t-V(X,p)_t\big)}{V(X,p)_t} \\
& \qquad - \frac{\bar{V}(X,p,2,u_n,2;\Delta_n)_t}{\bar{V}(X,p,2,u_n,1;\Delta_n)_t} \frac{(u_n\Delta_n)^{-1/2}\big(\bar{V}(X,p,2,u_n,1;\Delta_n)_t-V(X,p)_t\big)}{V(X,p)_t}\bigg) \\
& \qquad + (u_n\Delta_n)^{-1/2} \Bigg(\bigg(\frac{\tau_2(2\Delta_n)^2}{\tau_2(\Delta_n)^2}\bigg)^{p/2} - \big(2^{2\alpha+1}\big)^{p/2} \Bigg),
\end{aligned}
\end{multline}
where, provided that the assumptions of Theorem \ref{th3} are met, the former term on the r.h.s.\ converges stably in law to
\[
\frac{2^{\frac{(2\alpha+1)p}{2}+1} \sqrt{m_{2p}-m_p^2} \int_0^t |\sigma_s|^p d \tilde{W}_s}{V(X,p)_t},
\]
where $\tilde{W}$ is a one-dimensional standard Brownian motion independent of $\mathcal{F}$. 
Again, let us assume that $g$ is given by \eqref{gammakernel}.
Using \eqref{bias}, we find that the latter term on the r.h.s.\ of \eqref{cofgapsdecomp} is asymptotically negligible if and only if the sizes of the gaps satisfy
\begin{equation}\label{biasgaps}
u^{-1}_n = o(\Delta_n^{4\alpha-1}).
\end{equation}
(When $\alpha \in [\frac{1}{4},\frac{1}{2})$, for instance $u_n = [\Delta^{-\kappa}_n]$, where $\kappa \in (4\alpha-1,1)\neq \emptyset$, satisfy both $u_n\Delta_n \rightarrow 0$ and \eqref{biasgaps}.) Then, we obtain the following result that characterizes an asymptotically normal test statistic.

\begin{prop} \label{proplast} If the conditions of Theorem \ref{th3} hold, $g$ is given by \eqref{gammakernel} with $\alpha \in (-\frac{1}{2},0) \cup (0,\frac{1}{2})$, and \eqref{biasgaps} holds, then for any $p\geq 2$, we have
\[
\frac{\big(h_p(\cof(p,\Delta_n,u_n)_t)-\alpha\big) V(X,p,2,1;\Delta_n)_t}{|h'_p(\cof(p,\Delta_n)_t)| \cof(p,\Delta_n)_t \sqrt{ 4 (1-m^2_p/m_{2p})V(X,2p,2,1;\Delta_n) }} \schw N(0,1).
\]
\end{prop}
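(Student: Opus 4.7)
The proof follows the same blueprint as Proposition~\ref{cofnorm}, with the slower rate $(u_n\Delta_n)^{-1/2}$ from Theorem~\ref{th3} replacing $\Delta_n^{-1/2}$ from Theorem~\ref{th2}. My plan is to analyze the decomposition \eqref{cofgapsdecomp} term by term, transfer the resulting mixed-normal limit through the smooth map $h_p$ by the delta method, and finally studentize using consistent estimators provided by Theorem~\ref{th1}.

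For the deterministic second summand on the r.h.s.\ of \eqref{cofgapsdecomp}, the sharp estimate \eqref{bias} for the gamma kernel gives $\tau_2(2\Delta_n)^2/\tau_2(\Delta_n)^2-2^{2\alpha+1}=O(\Delta_n^{1-2\alpha})$, and since $p\geq 2$ the inequality $|x^{p/2}-y^{p/2}|\leq (p/2)(x\vee y)^{p/2-1}|x-y|$, applied with $x,y$ near $2^{2\alpha+1}>0$, shows that this summand, multiplied by $(u_n\Delta_n)^{-1/2}$, is of order $u_n^{-1/2}\Delta_n^{1/2-2\alpha}$ --- precisely $o(1)$ under the gap condition \eqref{biasgaps}. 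For the leading stochastic summand, Theorem~\ref{th3} provides joint stable convergence of the $(u_n\Delta_n)^{-1/2}$-normalized pair of centered $\bar V$-statistics to a $2$-dimensional mixed-normal limit with conditionally independent components; combined with the laws of large numbers \eqref{llngaps} (so that $\bar V(X,p,2,u_n,2;\Delta_n)_t/\bar V(X,p,2,u_n,1;\Delta_n)_t\toop 1$) and the scale identity $(\tau_2(2\Delta_n)/\tau_2(\Delta_n))^p\to\cof_\infty:=2^{(2\alpha+1)p/2}$, this yields the stable convergence of $(u_n\Delta_n)^{-1/2}(\cof(p,\Delta_n,u_n)_t-\cof_\infty)$ to the mixed-normal limit stated in the text between \eqref{cofgapsdecomp} and \eqref{biasgaps}. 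The delta method at $\cof_\infty>0$ then delivers a mixed-normal CLT for $(u_n\Delta_n)^{-1/2}(h_p(\cof(p,\Delta_n,u_n)_t)-\alpha)$ whose conditional variance is proportional to $(h_p'(\cof_\infty))^2\cof_\infty^2(m_{2p}-m_p^2)\int_0^t|\sigma_s|^{2p}\,ds/V(X,p)_t^2$.

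To feasibilize, Theorem~\ref{th1} applied with powers $p$ and $2p$ shows that $\Delta_n\tau_2(\Delta_n)^{-p}V(X,p,2,1;\Delta_n)_t$ and $\Delta_n\tau_2(\Delta_n)^{-2p}V(X,2p,2,1;\Delta_n)_t$ are consistent for $V(X,p)_t$ and $m_{2p}\int_0^t|\sigma_s|^{2p}\,ds$, respectively, while Proposition~\ref{propest} delivers consistency of $\cof(p,\Delta_n)_t$ for $\cof_\infty$, hence of $h_p'(\cof(p,\Delta_n)_t)\cof(p,\Delta_n)_t$ for $h_p'(\cof_\infty)\cof_\infty$. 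Plugging these estimators into the conditional standard deviation of the mixed-normal limit and invoking the standard Slutsky-type stability of stable convergence yields the claimed $N(0,1)$ limit. The one delicate step is the bookkeeping of scales: the unknown factors $\tau_2(\Delta_n)^{\pm p}$ must cancel cleanly in the ratio $\sqrt{V(X,2p,2,1;\Delta_n)_t}/V(X,p,2,1;\Delta_n)_t$, so that the rate $(u_n\Delta_n)^{-1/2}$ is absorbed implicitly through this ratio and the feasible statistic attains unit asymptotic variance. I expect this scale reconciliation, together with the sharpness check of \eqref{bias} in the first step, to be the only non-routine aspects of the argument.
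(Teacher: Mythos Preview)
Your approach is exactly the one the paper follows: it derives Proposition~\ref{proplast} from the decomposition~\eqref{cofgapsdecomp}, kills the second summand via~\eqref{bias} under the gap condition~\eqref{biasgaps}, applies Theorem~\ref{th3} together with~\eqref{llngaps} to the first summand, and then invokes the delta method and studentization precisely as you describe.

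There is, however, a genuine issue in the ``delicate step'' you flag. The ratio $V(X,p,2,1;\Delta_n)_t/\sqrt{V(X,2p,2,1;\Delta_n)_t}$ built from \emph{non-gapped} power variations equals $\Delta_n^{-1/2}\,\bar V(X,p,2,1;\Delta_n)_t/\sqrt{\bar V(X,2p,2,1;\Delta_n)_t}$, so it supplies the rate $\Delta_n^{-1/2}$, not $(u_n\Delta_n)^{-1/2}$. Since $h_p(\cof(p,\Delta_n,u_n)_t)-\alpha$ is of order $(u_n\Delta_n)^{1/2}$ by Theorem~\ref{th3}, the displayed statistic as printed carries an extra factor $u_n^{1/2}\to\infty$ and cannot converge to $N(0,1)$. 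The scale reconciliation you anticipate only goes through if the studentizing power variations are themselves gapped, i.e.\ $V(X,p,2,u_n,1;\Delta_n)_t$ and $V(X,2p,2,u_n,1;\Delta_n)_t$, in which case the ratio delivers exactly $(u_n\Delta_n)^{-1/2}$ and your argument is complete. This appears to be a typographical slip in the stated proposition rather than a flaw in your method; with that correction (and using~\eqref{llngaps} in place of Theorem~\ref{th1} for the consistency of the gapped studentizers), your proof is correct and coincides with the paper's.
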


\begin{rem}
Obviously, the price of using gaps is that we lose lots of observations. However, such a procedure may be optimal if the observations are highly correlated. In fact,
Lang and Roueff \cite{LR} consider the change-of-frequency statistic in the context of Gaussian stationary increment processes and show (see \cite[Theorems 1-3]{LR}) that the estimator with gaps achieves the minimax bound, which is given by $\Delta_n^{2\alpha -1}$
when $\alpha \in [\frac14, \frac 12)$. In the setting of $\mathcal BSS$ processes the estimator $\cof(p,\Delta_n,u_n)_t$
attains the minimax bound if we choose $u_n\propto \Delta_n^{1-4\alpha}$; in that case Proposition \ref{proplast} 
would contain an asymptotic bias.  \qed
\end{rem}

\section{Application to turbulence data} \label{turbulence}
\setcounter{equation}{0}

To illustrate the estimation of the smoothness parameter $\alpha$, we will consider a
data set consisting of 20 million one-point measurements of the
longitudinal component of the wind velocity in the atmospheric
boundary layer, $35$ m above ground. The measurements were
performed using a hot-wire anemometer and sampled at $5$ kHz with
a resolution of $12$ bits. The time series can be assumed to be
essentially stationary, the mean is $8.3$ m/s, and the standard deviation is
$2.3$ m/s. We refer to \cite{Dh-2000} for further details on the
data set (the data set is called no.~3 therein). The observations are
standardised to have zero mean and unit variance.

\begin{figure}
  \centering
  \includegraphics{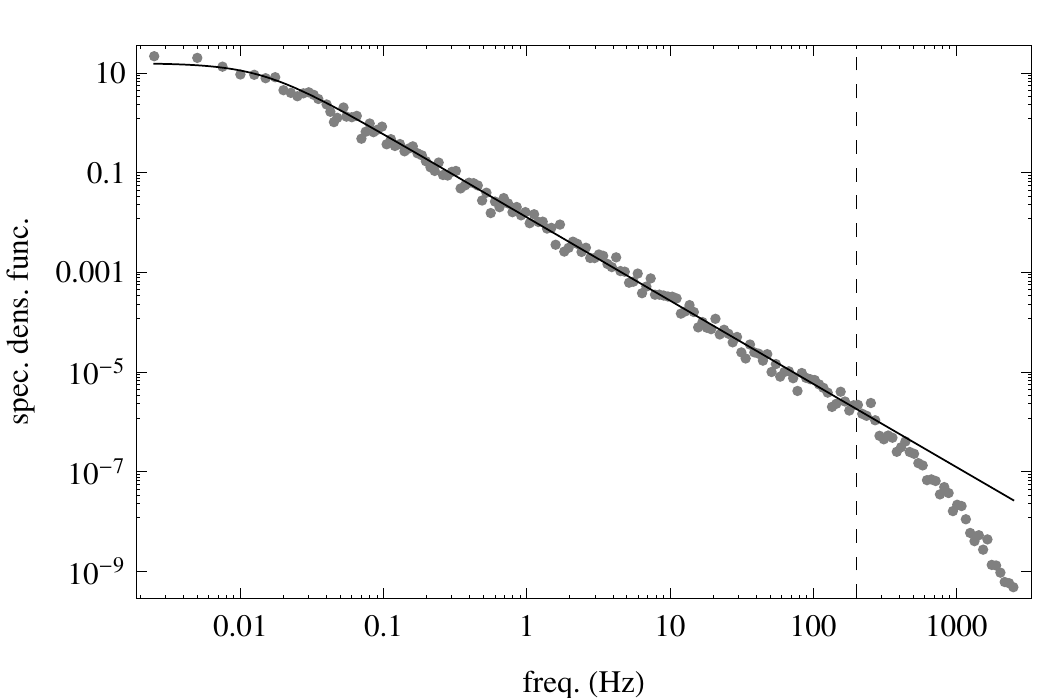}
  \caption{Gray dots: The spectral density of the measured data
    estimated using Welsh's overlapping segment averages with a
    Hanning taper, a segment length of 2~million, and an overlap of
    $50$ \%. Black curve: The spectral density~\protect\eqref{eq:1}
    with $\alpha = -0.165$ and $\lambda = 0.0884$ estimated using a
    least squares method in the double logarithmic
    representation. Only data with frequencies below $200$ Hz
    were used.}
  \label{fig:sdf}
\end{figure}

The spectral density $S$ of a $\mathcal{BSS}$ process with $g$ given by \eqref{gammakernel} satisfies
\begin{align}
  \label{eq:1}
  S(f) = \mathrm{const.}\times(1+(2\pi f/\lambda)^2)^{-(1+\alpha)},
\end{align}
where $f$ denotes the frequency. Thus, for $f\gg\lambda$, we have
$S(f) \propto f^{-2(1+\alpha)}$. In the context of turbulence,
Kolmogorov's $5/3$-law \cite{Ko-1941a} states that in the so-called
\emph{inertial range}, the spectral density is approximately proportional to $f^{-5/3}$. Putting $\alpha=-1/6$ reproduces the $5/3$-law for
$f\gg\lambda$. The weight function $g$ therefore defines an infinitely
long inertial range bounded from below, in the frequency domain,
approximately by $\lambda$.

Figure~\ref{fig:sdf} shows the spectral density estimated from the
turbulence data. For frequencies below approximately $200$ Hz, the
weight function $g$ provides a good fit with $\alpha=-0.165\approx-1/6$, in
agreement with Kolmogorov's $5/3$-law. The inertial range appears to
be from $0.1$ Hz to $200$ Hz. At higher frequencies (or,
equivalently, smaller scales) the model no longer describes the
spectral density accurately since the dissipation of the fluid's
kinetic energy into heat causes the spectral density to decay
approximately exponentially fast \cite{Si-Sm-Ya-1994}.

\begin{figure}
  \centering
  \begin{tabular}{cc}
    \includegraphics{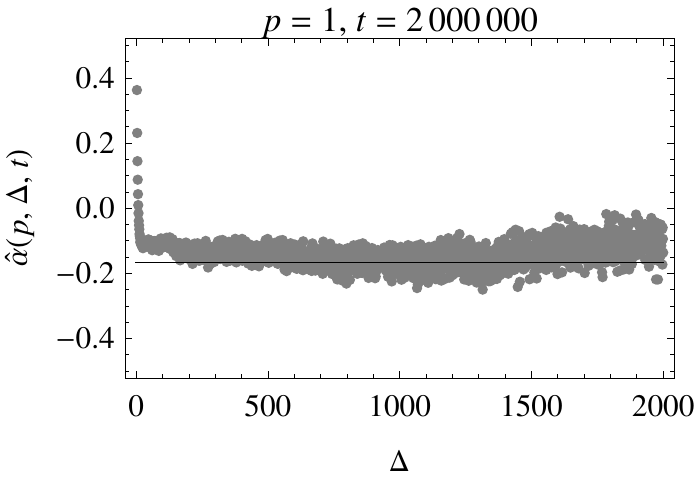} &
    \includegraphics{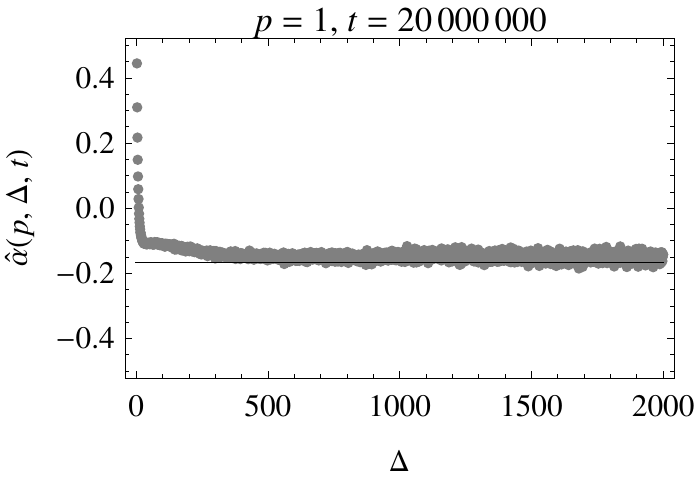} \\
    \includegraphics{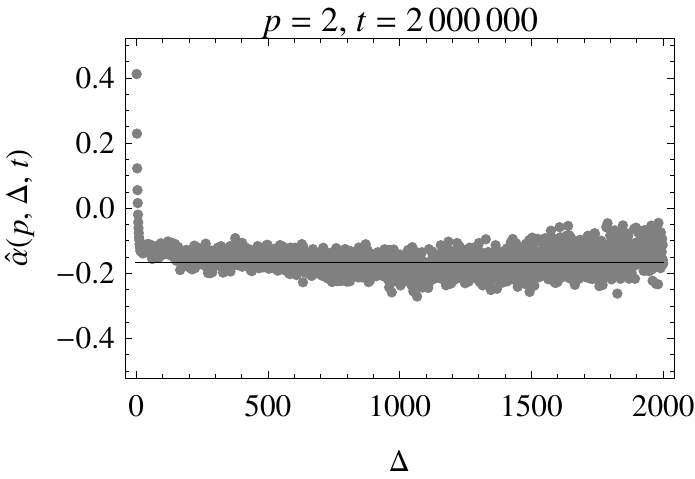} &
    \includegraphics{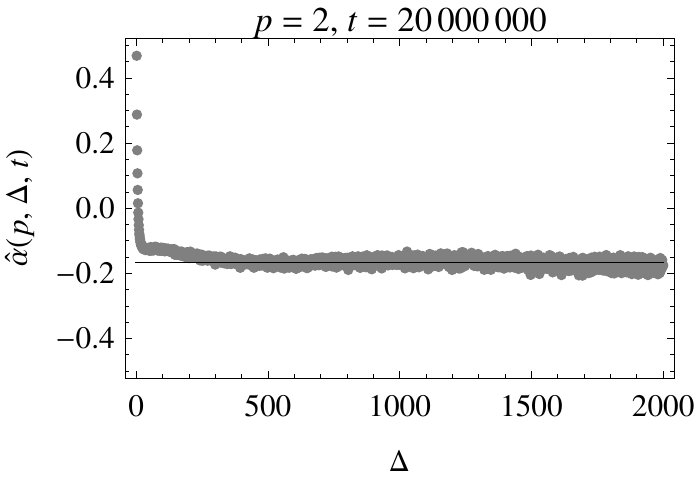} \\
    \includegraphics{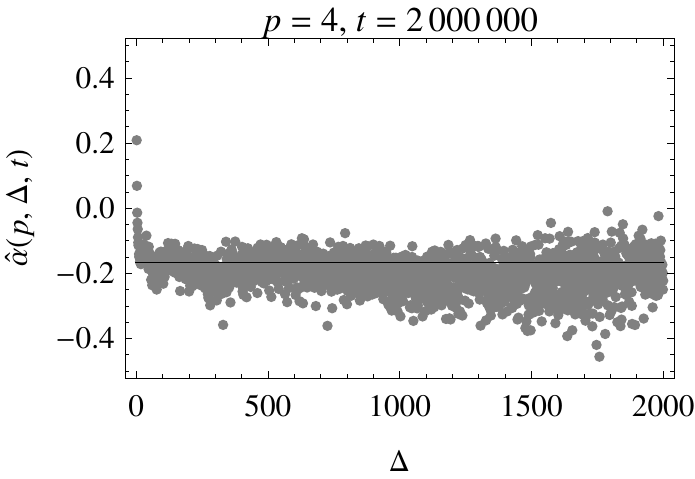} &
    \includegraphics{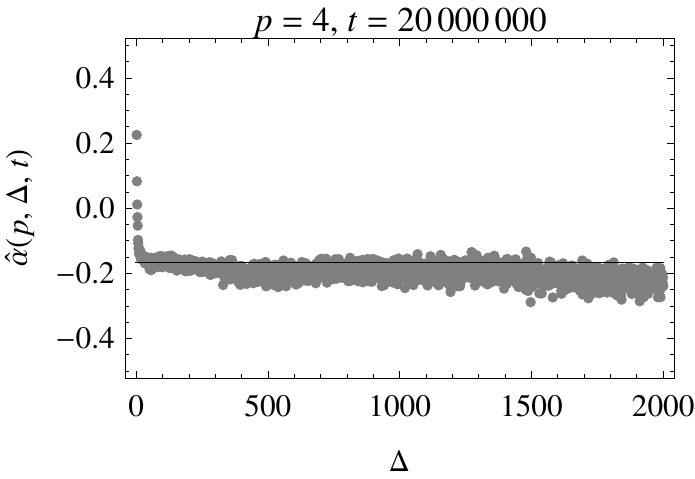} \\
    \includegraphics{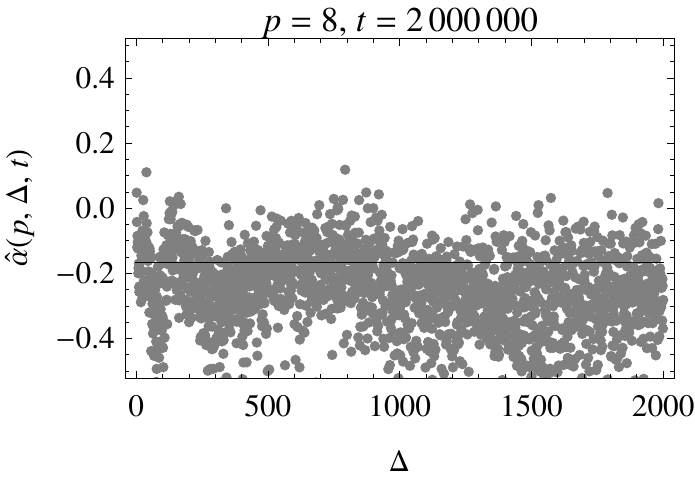} &
    \includegraphics{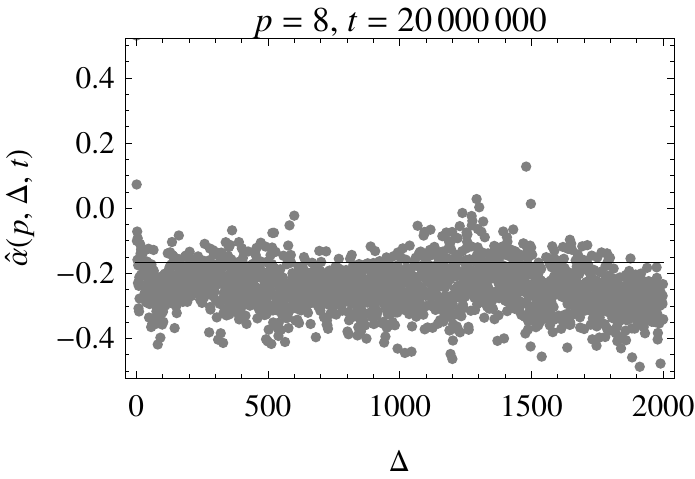}
  \end{tabular}
  \caption{Gray dots: The estimate $\hat\alpha(p,\Delta,t)$ for
    $t=2\,000\,000$ (left column) and $t=20\,000\,000$ (right column) for $p=1,2,4,8$
    (from top to bottom). The lag $\Delta$ varies from~$1$
    to~$2000$. Black line: The value $-1/6$ corresponding to
    Kolmogorov's $5/3$-law.}
  \label{fig:alpha}
\end{figure}

We use the estimator $\hat{\alpha}(p,\Delta_n,t)$ based on the change-of-frequency statistic, given by \eqref{cofconsistency}, to estimate the smoothness parameter $\alpha$. Figure~\ref{fig:alpha} shows the estimates $\hat{\alpha}(p,\Delta,t)$
for $t=2\,000\,000, 20\,000\,000$ ($\times 1/5000$ sec), $p=1,2,4,8$, and $\Delta=1,\ldots,2000$ ($\times 1/5000$ sec). Several
comments are in order. Unsurprisingly, the dispersion of the estimated
values of~$\alpha$ increases with increasing~$p$. In the case $t=20\,000\,000$, the amount of data available for the estimate of $\alpha$ is large enough to ensure a reliable estimate. We will therefore focus on the case $t=20\,000\,000$ (right
column). The best agreement with the expected $\alpha=-1/6$ is found
in the case $p=2$ when $\Delta>500$. This is perhaps not surprising,
since $p=2$ makes $\hat\alpha(p,\Delta,t)$ a second order statistic,
and the spectral density, being the Fourier transform of the
autocovariance function, is itself a second order
statistic. Furthermore, $\Delta>500$ is well within the inertial range
where the weight function $g$ has been shown to agree well with the data. 
Beyond the displayed estimates, $\hat\alpha(p,\Delta,t)$ appears fairly robust to changes in $p$ when $1.5\leq p\leq 2.5$.
At small lags, dissipation-scale effects cause the departure from
$\alpha=-1/6$ to be expected. The dependency of~$\hat\alpha(p,\Delta,t)$ on~$p$
for larger $p$ hints at some lack of robustness of the estimator. However, this is
likely due to insufficiencies of the $\mathcal{BSS}$ process in describing the data,
as simulations (Gaussian with $\sigma\equiv1$) display no such
dependency on~$p$. 

\section*{Acknowledgements}

M.S.~Pakkanen wishes to thank the Institute of Applied Mathematics at Heidelberg University for warm hospitality and acknowledge support from CREATES, funded by the Danish National Research Foundation, and from the Aarhus University Research Foundation regarding the  project ``Stochastic and Econometric Analysis of Commodity Markets".

\appendix

\section{Sketch of the proof of Theorem \ref{th3}} \label{proofs}
\setcounter{equation}{0}

The main tool for the proof of Theorem \ref{th3} relies on is the blocking technique used in \cite{BCP09,BCP11,BCP12}.
That is, for any $\ell>0$, we divide the interval $[0,t]$ into blocks of length $\ell^{-1}$. We then keep the value of the volatility process $\sigma$ fixed in each of the blocks and establish the stable convergence of the power variation. Finally, we pass to the limit $\ell \rightarrow \infty$ giving us the limit that appears in the statement of Theorem \ref{th3}.

More concretely, the blocking technique is based on the decomposition
\begin{equation*}
(u_n\Delta)_n^{-1/2} \big(\bar{V}(X,p,k,u_n,1;\Delta_n)_t - V(X,p)_t \big) = S(\ell,n)^{1}_t + S(n)^{2}_{t} + S(\ell,n)^{3}_t + S(\ell,n)^{4}_t,
\end{equation*}
where
\begin{align*}
S(\ell,n)^{1}_t & := \sum_{j=1}^{[\ell t]}|\sigma_{(j-1)/\ell}|^{p} (u_n \Delta_n)^{-1/2} \Big( u_n \Delta_n \tau_k(\Delta_n)^{-p/2}  \sum_{i \in I_\ell(j,n)} |\Delta^n_{i u_n,k} G|^p - m_p \ell^{-1} \Big), \\
S(n)^{2}_{t} &  := (u_n \Delta_n)^{1/2} \tau_k(\Delta_n)^{-p/2} \sum_{i=[k/u_n]+1}^{[t/(u_n \Delta_n)]} (|\Delta^n_{iu_n,k} X|^p- |\sigma_{(iu_n -k)\Delta_n} \Delta^n_{iu_n,k} G|^p), \\
S(\ell,n)^{3}_t &  := (u_n \Delta_n)^{1/2}\tau_k(\Delta_n)^{-p/2} \Big( \sum_{i=[k/u_n]+1}^{[t/(u_n \Delta_n)]} |\sigma_{(iu_n -k)\Delta_n} \Delta^n_{iu_n,k} G|^p
  \\
  & \qquad - \sum_{j=1}^{[\ell t]}|\sigma_{(j-1)/\ell}|^p \sum_{i \in I_\ell(j,n)} |\Delta^n_{i u_n,k} G|^p\Big), \\
S(\ell,n)^{4}_t &  := m_p (u_n \Delta_n)^{-1/2} \Big(\ell^{-1}\sum_{j=1}^{[\ell t]}|\sigma_{(j-1)/\ell}|^p - \int_0^t |\sigma_s|^p ds  \Big),
\end{align*}
with $I_\ell(j,n) := \big\{ i \in \N : i > [k/u_n], \, iu_n \Delta_n \in \big((j-1)/\ell,j/\ell \big]Ê\big\}$. An analogous decomposition can be derived also for $\bar{V}(X,p,k,u_n,2;\Delta_n)_t$ under the same centering and normalization. Using methods similar to \cite{BCP12}, we may show that for any $\delta>0$,
\begin{equation*}
\lim_{\ell \rightarrow \infty}\limsup_{n \rightarrow \infty}\PP\Big( \sup_{t \in [0,T]}|S(n)^{2}_{t} + S(\ell,n)^{3}_t + S(\ell,n)^{4}_t| > \delta \Big) = 0.
\end{equation*}
Thus, only $S(\ell,n)^{1}_t$ contributes to the limit. Let us denote by $B^1$ and $B^2$ the components of the $2$-dimensional Brownian motion $B$ appearing in the statement of Theorem \ref{th3}. By Lemma \ref{gaussiancore}, below, and the properties of stable convergence, we have that
\begin{equation}\label{blocking}
S(\ell,n)^{1}_t \stab \sqrt{m_{2p}-m^2_p}\sum_{j=1}^{[\ell t]}|\sigma_{(j-1)/\ell}|^{p} (B^1_{j/\ell}-B^1_{1,(j-1)/\ell}),Ê\quad n \rightarrow \infty.
\end{equation} 
 When $\ell \rightarrow \infty$, the r.h.s.\ of \eqref{blocking} converges in probability to
\begin{equation*}
\sqrt{m_{2p}-m^2_p} \int_0^t |\sigma_s|^p d B^1_s.
\end{equation*}
Again, analogous statements hold \emph{jointly} for $\bar{V}(X,p,k,u_n,2;\Delta_n)_t$, but $B^1$ replaced with $B^2$ therein.

To complete the argument, it remains to prove the following lemma concerning the power variations of the Gaussian core $G$.

\begin{lem}\label{gaussiancore}
Assume that the conditions (\hyperlink{A1}{A1}), (\hyperlink{A2}{A2}) hold. Moreover, let $u_n \rightarrow \infty$ so that $u_n \Delta_n \rightarrow 0$. If $k=1$ we further assume that $\alpha \in (-\frac 12,0)$. Then we obtain the stable convergence
\begin{equation*}
(u_n\Delta)_n^{-1/2} \Big(\bar{V}(G,p,k,u_n,1;\Delta_n)_t - m_p t,
\bar{V}(G,p,k,u_n,2;\Delta_n)_t - m_p t  \Big) \stab  \sqrt{m_{2p}-m_p^2} B_t
\end{equation*}
on $\mathbb D^2([0,T])$ equipped with the uniform topology, where 
$B$ is as in Theorem \ref{th3}.
\end{lem}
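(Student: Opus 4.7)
The strategy adapts the Hermite-expansion-based CLT of \cite{BCP11,BCP12} to the gapped setting, with the crucial simplification that $u_n\to\infty$ forces the Gaussian increments entering the power variation to become asymptotically uncorrelated. Introduce the standardized Gaussian variables
\[
\xi^{n,1}_i := \tau_k(\Delta_n)^{-1}\,\Delta^{n}_{iu_n,k}G, \qquad
\xi^{n,2}_i := \tau_k(2\Delta_n)^{-1}\,\Delta^{n,2}_{iu_n+[u_n/2],k}G,
\]
each of which is $\mathcal{N}(0,1)$. The claim is equivalent to the joint stable convergence
\[
T^v_n(t) := \sqrt{u_n\Delta_n}\sum_{i=[k/u_n]+1}^{[t/(u_n\Delta_n)]} f\bigl(\xi^{n,v}_i\bigr)\stab \sqrt{m_{2p}-m_p^2}\,B^v_t,\qquad v=1,2,
\]
where $f(x)=|x|^p-m_p$ admits the Hermite expansion $f=\sum_{l\ge 2}a_l H_l$ of rank $2$, with $\sum_{l\ge 2}l!a_l^2<\infty$ by \eqref{variance}.

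Next I would compute the asymptotic covariance matrix of $(T^1_n(t),T^2_n(t))$ using the identity $\E[H_l(U)H_{l'}(V)]=l!\rho^l\mathbbm{1}_{\{l=l'\}}$ for jointly standard Gaussians with correlation $\rho$. Denoting by $\rho^{n,v,v'}_{i,j}$ the correlation between $\xi^{n,v}_i$ and $\xi^{n,v'}_j$, the diagonal $i=j$ contribution to $\mathrm{var}(T^v_n(t))$ yields $u_n\Delta_n N_n(m_{2p}-m_p^2)\to t(m_{2p}-m_p^2)$ with $N_n=[t/(u_n\Delta_n)]-[k/u_n]$. For the off-diagonal part, Assumption (\hyperlink{A2}{A2}) and the polarization argument of Section~\ref{asycorr} give $|\rho^{n,1,1}_{i,j}|\le C(|i-j|u_n)^{2H-2k}$ for large $n$ and $i\ne j$, with $H=\alpha+\frac12$. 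Since $l(2H-2k)<-1$ for every $l\ge 2$ under our assumptions (using $H<\frac12$ when $k=1$ and $H<1$ when $k\ge 2$), one obtains
\[
u_n\Delta_n\sum_{i\ne j}|\rho^{n,1,1}_{i,j}|^l \le C\, u_n^{l(2H-2k)}\sum_{m\ge 1}m^{l(2H-2k)}\to 0,
\]
and entirely analogous bounds hold for $\rho^{n,1,2}_{i,j}$ and $\rho^{n,2,2}_{i,j}$. Summation in $l$ is justified by $\sum_l l!a_l^2<\infty$, so that the limiting covariance is the diagonal matrix $t(m_{2p}-m_p^2)I_2$, matching the limit in the statement.

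The Gaussian limit at fixed $t$ then follows from the Breuer--Major theorem applied to each truncated expansion $\sum_{l=2}^L a_l H_l(\xi^{n,v}_i)$, since, by the estimates above, the tail $l>L$ contributes uniformly small variance, allowing one to pass $L\to\infty$. Functional convergence on $\mathbb{D}^2([0,T])$ is obtained by standard fourth-moment tightness estimates. Stable convergence with respect to $\mathcal{F}$ is upgraded from convergence in law via the classical argument of approximating $G$ on each block by its contribution from $W$-increments lying in that block; the neglected terms have total variance of order $u_n\Delta_n\to 0$ per unit length and hence do not affect the limit, while the approximating blocks become, after sufficient truncation of the past, independent of any prescribed $\mathcal{F}_s$.

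\textbf{Main obstacle.} The principal technical difficulty is securing the off-diagonal covariance estimates uniformly in the Hermite order $l$; this is precisely where the restriction $\alpha\in(-\tfrac12,0)$ in the $k=1$ case enters, since without it the series $\sum_{m\ge 1}|r_{1,n}(mu_n)|^l$ fails to be summable already for $l=2$. The stability upgrade is also delicate because the Gaussian core $G$ is $\mathcal{F}$-measurable; the asymptotic independence of the limit from $\mathcal{F}$ arises only after summation across many asymptotically decorrelated blocks.
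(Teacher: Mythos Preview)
Your overall strategy---Hermite expansion, covariance control, and exploiting $u_n\to\infty$ for asymptotic decorrelation---matches the paper's. But two steps in your execution are not rigorous as written and the paper handles them differently.

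First, you invoke the Breuer--Major theorem, which as classically stated applies to a \emph{fixed} stationary Gaussian sequence. Here the correlation structure $r_{k,n}(\cdot)$ depends on $n$ (a triangular array), so Breuer--Major is not directly applicable even for the truncated expansion. The paper instead writes the chaos expansion $Z^v_t=\sum_{l\ge 2}I_l(F^{v,l,n}_t)$ with $F^{v,l,n}_t\in\h^{\odot l}$ and verifies the Nualart--Peccati/Peccati--Tudor conditions (packaged as Theorem~5 of \cite{BCP09}): convergence of the inner products $\langle F^{v,l,n}_s,F^{w,l,n}_t\rangle_{\h^{\otimes l}}$ (to $a_l^2 t$ when $v=w$ and to $0$ when $v\neq w$, the latter using the shift $[u_n/2]$), uniform tail control $\lim_{m\to\infty}\limsup_n\sum_{l\ge m}l!\|F^{v,l,n}_t\|^2=0$, and vanishing of all contractions $\|F^{v,l,n}_t\otimes_m F^{v,l,n}_t\|_{\h^{\otimes 2(l-m)}}\to 0$ for $1\le m\le l-1$. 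The contraction bound is the rigorous substitute for your Breuer--Major step; it handles the $n$-dependent correlations and delivers the joint Gaussian limit for both components at once.

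Second, your bound $|\rho^{n,1,1}_{i,j}|\le C(|i-j|u_n)^{2H-2k}$ is the \emph{limiting} rate of $\rho_k$, but (\hyperlink{A2}{A2}) involves slowly varying functions, so this power law is not immediate uniformly in $n$ and in the lag. The paper avoids any explicit rate: it takes a dominating sequence $(\bar r_k(j))_{j\ge 0}$ with $|r_{k,n}(j)|\le \bar r_k(j)$ and $\sum_j \bar r_k(j)^2<\infty$ (Lemma~1 of \cite{BCP09}, which is where (\hyperlink{A2}{A2})(iii) enters), and then the gap assumption gives the entire off-diagonal via $\sum_{j\ge 1}\bar r_k(u_n j)^2\le\sum_{j\ge u_n}\bar r_k(j)^2\to 0$. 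This is both simpler and robust to the slowly varying factors. Tightness and the upgrade to stable convergence are then read off from Theorem~6 of \cite{BCP09}; your block-truncation sketch for stability is in the right spirit but would need considerably more detail to stand on its own.
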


We need to introduce first some concepts that are needed in proof of Lemma \ref{gaussiancore}.
Let us denote by $\h$ the closed subspace of $L^2(\Omega,\f,\PP)$ spanned by the Gaussian random variables $G_t$, $t \geq 0$. Note that $\h$ is then a Hilbert space consisting of centered Gaussian random variables. Thus, the inclusion map $\h \hookrightarrow L^2(\Omega,\f,\PP)$ can be seen as an \emph{isonormal Gaussian process}, with respect to which we may define \emph{multiple Wiener integrals} of orders $l = 1,2,\ldots$, denoted by $I_l$. Recall that $I_l$ is a linear isometry $\h^{\odot l} \rightarrow L^2(\Omega,\f,\PP)$, where $\h^{\odot l}$ denotes the $l$-fold symmetric tensor product of $\h$.
In particular, $I_1(G_t) = G_t$ for all $t \geq 0$.

Suppose that $U \in \h$ is such that $\| U \|_{\h} = \E[U^2]^{1/2} =1$ and $f$ is defined by \eqref{f}. Then, using the Hermite expansion of $f$ \eqref{fherm} and the connection of Hermite polynomials and multiple Wiener integrals \cite[Proposition 1.1.4]{Nua}, we obtain a \emph{chaos expansion}
\begin{equation}\label{chaos} 
f(U) = \sum_{l=2}^\infty a_l I_l(U^{\otimes l}),
\end{equation}
strongly convergent in $L^2(\Omega,\f,\PP)$, where $U^{\otimes l}$ stands for the $l$-fold tensor product of $U$. 
Convenient sufficient conditions that functionals admitting chaos expansions of the form \eqref{chaos} converge to a Gaussian law are provided in \cite[Theorem 5]{BCP09}, building on the results of Nualart and Peccati \cite{NP}, and Peccati and Tudor \cite{PT}. We will rely on these conditions in the proof, below.
One of the conditions involve the so-called \emph{contractions} of elements of $\h^{\otimes l}$. To recall the definition, let $V_1$,\ $V_2 \in \h^{\otimes l}$, where $l \geq 2$. Then for $q = 0,1,\ldots,l-1$, the $q$-th contraction of $V_1$ and $V_2$, which is an element of $\h^{2(l-q)}$, is given by
\begin{multline*}
(V_1 \otimes_q V_2) (\omega'_1,\ldots, \omega'_{2(l-q)}) \\ := \int_{\Omega^q}V_1(\omega'_1,\ldots,\omega'_{l-q},\omega_{1},\ldots,\omega_q)V_2(\omega_{1},\ldots,\omega_q,\omega'_{l-q+1},\ldots,\omega'_{2(l-q)})\PP^{\otimes q}(d\omega_1,\ldots,d\omega_q).
\end{multline*}
In the case $q=0$ the definition reduces to the tensor product $V_1 \otimes V_2$.

\begin{proof}[Proof of Lemma \ref{gaussiancore}]
To simplify notation, let us denote for $v=1,2$,
\begin{equation*}
Z^v_t  := (u_n\Delta)_n^{-1/2} \big(\bar{V}(G,p,k,u_n,v;\Delta_n)_t - m_p t \big).
\end{equation*}
Using \eqref{chaos} and the linearity of multiple Wiener integrals, we obtain
\begin{equation*}
\begin{split}
Z^1_t & = (u_n\Delta_n)^{1/2} \sum_{i=[k/u_n]+1}^{[t/(u_n\Delta_n)]} f\bigg(\frac{\Delta_{iu_n,k}^n X}{\tau_{k}(\Delta_n)}\bigg) + O\big((u_n\Delta_n)^{1/2}\big) \\
& = \sum_{l=2}^\infty I_l \Bigg(\underbrace{a_l(u_n\Delta_n)^{1/2}\sum_{i=[k/u_n]+1}^{[t/(u_n\Delta_n)]} \bigg(\frac{\Delta_{iu_n,k}^n X}{\tau_{k}(\Delta_n)} \bigg)^{\otimes l}}_{=: F^{1,l,n}_t \in \h^{\odot l}} \Bigg)+ O\big((u_n\Delta_n)^{1/2}\big),
\end{split}
\end{equation*}
where the remainder term is uniform in $t$. Analogously,
\begin{equation*}
Z^2_t = \sum_{l=2}^\infty I_l \Bigg(\underbrace{a_l(u_n\Delta_n)^{1/2}\sum_{i=[k/u_n]+1}^{[t/(u_n\Delta_n)]-1} \bigg(\frac{\Delta_{iu_n + [u_n/2],k}^{n,2} X}{\tau_{k}(2\Delta_n)} \bigg)^{\otimes l}}_{=: F^{2,l,n}_t\in \h^{\odot l}} \Bigg)+ O\big((u_n\Delta_n)^{1/2}\big).
\end{equation*}

To verify the conditions of Theorem 5 of \cite{BCP09}, we need to estimate, for any $s \geq t$, the inner product
\begin{equation*}
\big\langle F^{1,l,n}_s, F^{1,l,n}_t\big\rangle_{\h^{\otimes l}} = a^2_l \big(t + A^1_{l,n}+A^2_{l,n} + O(u_n\Delta_n)\big),
\end{equation*}
where the remainder term is uniform in $l$ and
\begin{align*}
A^1_{l,n} &= 2 u_n \Delta_n \sum_{j=1}^{[t/(u_n \Delta_n)]-[k/u_n]-1} ([t/(u_n \Delta_n)]-[k/u_n]-j) r_{k,n}(u_n j)^l,\\
A^2_{l,n} &= u_n \Delta_n \sum_{j=1}^{[t/(u_n \Delta_n)]-[k/u_n]} c_{j,n} r_{k,n}(u_n j)^l,
\end{align*}
with $\sup_j |c_{j,n}| \leq [s/(u_n \Delta_n)] -[t/(u_n \Delta_n)]$.
By Assumptions (\hyperlink{A1}{A1}) and (\hyperlink{A2}{A2}), there exists (cf.\ Lemma 1 of \cite{BCP09}) a sequence $\big(\bar{r}_k(j)\big)_{j \geq 0}$  such that $|r_{k,n}(j)| \leq \bar{r}_k(j)$ for all $j \geq 0$ and $n \in \N$, with
\begin{equation*}
\sum_{j=0}^\infty \bar{r}_k(j)^2 < \infty.
\end{equation*}
Since $l \geq 2$, we have
\begin{align*}
|A^1_{l,n} | & \leq C\sum_{j=1}^{[t/(u_n \Delta_n)]-[k/u_n]-1} r_{k,n}(u_n j)^2 \leq C \sum_{j=1}^\infty \bar{r}_k(u_n j)^2 \leq C \sum_{j=u_n}^\infty \bar{r}_k(j)^2,\\
|A^2_{l,n} | & \leq \big(s-t + O(u_n \Delta_n) \big) \sum_{j=1}^\infty \bar{r}_k(u_n j)^2\leq C \sum_{j=u_n}^\infty \bar{r}_k(j)^2,
\end{align*}
where $C>0$ is a constant that depends on $s$, but not on $n$ nor $l$. Letting $n \rightarrow \infty$ we obtain now
\begin{equation*}
l! \big\langle F^{1,l,n}_s, F^{1,l,n}_t\big\rangle_{\h^{\otimes l}} \rightarrow l! a^2_l t
\end{equation*}
due to the assumption $u_n \rightarrow \infty$.
In the view of \eqref{variance}, it is then immediate that
\begin{equation*}
\lim_{m\rightarrow \infty} \limsup_{n \rightarrow \infty} \sum_{l=m}^\infty l! \|F^{1,l,n}_t \|^2_{\h^{\otimes l}} = 0.
\end{equation*}
We may establish in a similar manner that
\begin{equation*}
l! \big\langle F^{2,l,n}_s, F^{2,l,n}_t\big\rangle_{\h^{\otimes l}} \rightarrow l! a^2_l t\quad \textrm{and} \quad \lim_{m\rightarrow \infty} \limsup_{n \rightarrow \infty} \sum_{l=m}^\infty l! \|F^{2,l,n}_t \|^2_{\h^{\otimes l}} = 0.
\end{equation*}
Next, let us define
\begin{equation*}
\rho_{k,n}(j) := \mathrm{corr}(\Delta^{n,1}_{k,k} G,\Delta^{n,2}_{k+j,k} G), \qquad j\in \Z.
\end{equation*}
Assumptions (\hyperlink{A1}{A1}) and (\hyperlink{A2}{A2}) imply (cf.\ Lemma 1 of \cite{BCP09}) that there exist a sequence $\big(\bar{\rho}_k(j)\big)_{j \in \Z}$  such that $|\rho_{k,n}(j)| \leq \bar{\rho}_k(j)$ for all $j \in \Z$ and $n \in \N$, with
\begin{equation*}
\sum_{j=-\infty}^\infty \bar{\rho}_k(j)^2 < \infty.
\end{equation*}
Now, for any $s$, $t\in [0,T]$, we have
\begin{equation*}
\begin{split}
\big| l! \big\langle F^{1,l,n}_s, F^{2,l,n}_t\big\rangle_{\h^{\otimes l}}\big| & \leq l!a^2_l u_n\Delta_n \sum_{i=[k/u_n]+1}^{[s/(u_n \Delta_n)]}\sum_{j=[k/u_n]+1}^{[t/(u_n \Delta_n)]-1} 
|\rho_{k,n}(u_n(j-i) + [u_n/2])|^l \\
& \leq C'\sum_{j=-\infty}^\infty \bar{\rho}_k(u_n j + [u_n/2])^2 \rightarrow 0
\end{split}
\end{equation*}
when $n \rightarrow \infty$, where $C'>0$ is a constant that depends on $s$ and $t$.

Additionally, for any $m = 1,\ldots,l-1$, we estimate the contraction $F^{1,l,n}_t \otimes_m F^{1,l,n}_t$ via 
\begin{multline*}
\|F^{1,l,n}_t \otimes_m F^{1,l,n}_t \|^2_{\h^{\otimes 2(l-m)}} \\ \leq (u_n\Delta_n)^2 \sum_{i_1,i_2,i_3,i_4=1}^{[t/(u_n\Delta_n)]} |r_{k,n}(u_n |i_1-i_2 |)|^m 
|r_{k,n}(u_n |i_3-i_4 |)|^m \\ \times |r_{k,n}(u_n |i_1-i_3 |)|^{l-m} |r_{k,n}(u_n |i_2-i_4 |)|^{l-m},
\end{multline*}
the r.h.s.\ of which is bounded by a positive constant times
\begin{multline}\label{contraction}
u_n\Delta_n \sum_{i_1,i_2,i_3=1}^{[t/(u_n\Delta_n)]} |r_{k,n}(u_n i_1 )|^m 
|r_{k,n}(u_n i_2)|^m |r_{k,n}(u_n |i_1-i_3| )|^{l-m} 
|r_{k,n}(u_n |i_2-i_3|)|^{l-m} \\
= u_n\Delta_n \sum_{i_2=1}^{[t/(u_n\Delta_n)]}  \bigg( \sum_{i_1=1}^{[t/(u_n\Delta_n)]}|r_{k,n}(u_n i_1 )|^m 
|r_{k,n}(u_n |i_1-i_2| )|^{l-m} \bigg)^2.
\end{multline}
By the Cauchy--Schwarz inequality, the r.h.s.\ of \eqref{contraction} is in turn bounded by
\begin{multline*}
\sum_{i_2=1}^{[t/(u_n\Delta_n)]} |r_{k,n}(u_n i_2 )|^{2m} \bigg(1+ \sum_{i_1=1}^{[t/(u_n\Delta_n)]} |r_{k,n}(u_n i_1 )|^{2(l-m)} \bigg) \\
\leq C'' \sum_{i_2=1}^{[t/(u_n\Delta_n)]} |\bar{r}_k(u_n i_2 )|^{2} \bigg(1+ \sum_{i_1=1}^{[t/(u_n\Delta_n)]} |\bar{r}_{k}(u_n i_1 )|^{2} \bigg),
\end{multline*}
where $C''>0$ is a constant that depends on $t$. Thus, we have $\|F^{1,l,n}_t \otimes_m F^{1,l,n}_t \|^2_{\h^{\otimes 2(l-m)}} \rightarrow 0$ as $n \rightarrow \infty$. The contraction $F^{2,l,n}_t \otimes_m F^{2,l,n}_t$ can be treated in an analogous manner.

The preceding steps and Theorem 5 of \cite{BCP09} allow us to conclude that the finite dimensional distributions of the process $(Z^1_t,Z^2_t)_{t \in [0,T]}$ converge weakly to those of
\begin{equation*}
\sqrt{\sum_{l=2}^\infty l! a^2_l} \, B_t = \sqrt{m_{2p}-m^2_p} \, B_t, \qquad t \in [0,T].
\end{equation*}
Stability of the convergence and tightness in $\mathbb D^2([0,T])$ can be established similarly as in the proof of Theorem 6 of \cite{BCP09}.
\end{proof}

\end{document}